% LaTeX 2e document, article style, xy-pic
\documentclass[10pt,a4paper]{article}
\usepackage{amsmath,amssymb,amsfonts,amsthm}
\usepackage[latin1]{inputenc}
\usepackage[english]{babel}
\usepackage{graphics}
\usepackage[mathscr]{eucal}

\usepackage{hyperref}

%+ ##paolo 07-09-2011## 
\usepackage[all,2cell]{xy} \UseAllTwocells 		% for diagrams + 2-cells 	
%-

%\usepackage[all]{xy}
%\usepackage[cmtip,all]{xy}
%\xyoption{all}
%\xyoption{2cell}

%+ ##paolo 07-09-2011## 
%\usepackage[draft=false,hypertex,backref]{hyperref} 		% to generate hyperlinks in dvi, pdf (hypertex for browser??)
%\newcommand{\hlink}[2]{\href{#1}{\texttt{#2}}} % usage: \hlink{URL}{TEXT} to make hyperlink with text anchor not necessarily given their URL in typewriter stile. 
%- 

\pagestyle{plain}        %change to headings or plain
%\markboth{}
\linespread{}
\setlength{\parindent}{20pt}
\setlength{\hoffset}{-0.7cm}
\addtolength{\textwidth}{1.4cm}
\addtolength{\voffset}{-1.3cm}
\addtolength{\textheight}{2.6cm}

%+ ##paolo 03.03.12##

%-

%+ ##paolo 09-09-2011##
\newtheorem{theorem}{Theorem}[section]							% theorems numbered under sections
\newtheorem{proposition}[theorem]{Proposition}
\newtheorem{lemma}[theorem]{Lemma}
     
 \theoremstyle{definition}
\newtheorem{definition}[theorem]{Definition}
\newtheorem{remark}[theorem]{Remark} 
\newtheorem{remarks}[theorem]{Remarks}
 \newtheorem{example}[theorem]{Example} 
 
  \newtheorem{comments}[theorem]{Comments}
 \newtheorem{conjecture}[theorem]{Conjecture}
%-

%\newcommand{\C}{\mathcal{C}}
%\newcommand{\Cf}{\mathscr{C}}
%\newcommand{\Cs}{\mathscr{C}}
%\newcommand{\Df}{\mathcal{D}}
%\newcommand{\E}{\mathcal{E}}
%\newcommand{\Gf}{\mathcal{G}}
%\newcommand{\Is}{\mathscr{I}}
%\newcommand{\X}{\mathcal{X}}
\newcommand{\bbc}{\mathbb{C}}

% short names for normal calligraphic fonts
\newcommand{\A}{\mathcal{A}}\newcommand{\B}{\mathcal{B}}\newcommand{\C}{\mathcal{C}}
\newcommand{\D}{\mathcal{D}}
\newcommand{\E}{\mathcal{E}}\newcommand{\G}{\mathcal{G}}
\renewcommand{\H}{\mathcal{H}} 	%re!
\newcommand{\I}{\mathcal{I}}
		%re!

	%re!spectral C*-categories
	%re!

%\renewcommand{\S}{\mathcal{S}}	%re!
\newcommand{\U}{\mathcal{U}}

\newcommand{\X}{\mathcal{X}}

% short names for normal gotic fonts

% short name for blackboardbold fonts (R and C numbers ...)
 %\re!

 %re!

% short names for an alternative calligraphic font
\newcommand{\Cs}{{\mathscr{C}}}
\newcommand{\Ds}{{\mathscr{D}}}
\newcommand{\Es}{{\mathscr{E}}}\newcommand{\Fs}{{\mathscr{F}}}\newcommand{\Gs}{{\mathscr{G}}}
\newcommand{\Hs}{{\mathscr{H}}}

	%\L with \LS

	%\P with \PS
\newcommand{\Ss}{{\mathscr{S}}}
 			%we substituted \T with \TS

% the package mathrsfs has conflicting definition of commands with the eucal package with option mathcal. To solve this problem we temporarily reproduce here the commands of the mathrsfs package and we modify the command \mathscr into \scr
\DeclareFontFamily{U}{rsfs}{\skewchar\font127 }
\DeclareFontShape{U}{rsfs}{m}{n}{%
   <5> <6> rsfs5
   <7> rsfs7
   <8> <9> <10> <10.95> <12> <14.4> <17.28> <20.74> <24.88> rsfs10
}{}
\DeclareSymbolFont{rsfs}{U}{rsfs}{m}{n}
\DeclareSymbolFontAlphabet{\scr}{rsfs}% was ... mathscr

%+ ##paolo 07-09-2011## 
\DeclareMathOperator{\Ob}{Ob}
\DeclareMathOperator{\id}{Id} 
\DeclareMathOperator{\Hom}{Hom}
\DeclareMathOperator{\Bis}{Bis}
%\DeclareMathOperator{\dimn}{dim}

%+##paolo28-09-2011##
% various short names for sometimes useful constructs
%\renewcommand{\emph}{\textbf} 										% emphasis with boldface
									% usage: \cj{x} for conjugate of x
									% usage: \mip{a}{b} inner product
					% usage \ip{a}{b} Dirac Inner product
								% IFF
										% implication
													% such that
%-

% long lines to save paper 
\setlength{\textwidth}{410pt}				% longer lines % 390 
%\setlength{\oddsidemargin}{57pt}		% new margins for twosided style 
%\setlength{\evensidemargin}{7pt}		% new margins for twosided style 

%big pages to save paper
\setlength{\hoffset}{-1cm}
%\setlength{\voffset}{-2.3cm}%-2.3
%\addtolength{\textwidth}{4.4cm}
%\addtolength{\textheight}{4.6cm} 
%-

\begin{document}

\title{Spectral C*-categories and Fell bundles with path-lifting}

\author{\normalsize  
Rachel A.D. Martins 
\\
\normalsize  \textit{Centro de An\'alise Matem\'atica e Sistemas Din\^amicos, Departamento de
Matem\'atica,}
\\
\normalsize \textit{Instituto Superior T\'ecnico, Universidade T\'ecnica de Lisboa,}
\\
\normalsize \textit{Av.~Rovisco Pais 1, 1049-001 Lisboa, Portugal}
 \thanks{Email: rmartins@math.ist.utl.pt. Research supported by Funda\c{c}$\tilde{\mathrm{a}}$o   para as Ci\^encias e a Tecnologia (FCT)
including programs POCI 2010/FEDER and SFRH/BPD/32331/2006.}}

%\date{\normalsize{May 2012 \quad submitted: ?, ? 2012 \quad revised: }}
%\date{\today}
\date{\normalsize{\today}}

\maketitle

\begin{abstract}
 Following Crane's suggestion that categorification should be of fundamental importance in quantising
gravity, we show that finite dimensional even $S^o$-real spectral triples over $\bbc$ are already
nothing more than full C*-categories together with a self-adjoint section of their domain maps,
while the latter are equivalent to unital saturated Fell bundles over
pair groupoids equipped with a path-lifting operator given by a normaliser. Interpretations can be made
in the direction of quantum Higgs gravity. These geometries are automatically quantum geometries and we
reconstruct the classical limit, that is, general relativity on a Riemannian spin manifold.
\end{abstract}

%\newpage

\section{Introduction}
Complementary to traditional studies towards quantum gravity, Connes' approach to unification is to
first geometrise the other three fundamental forces rather than to directly quantise the gravitational
(geometrical) force. Connes' and Chamseddine's spectral action principle \cite{sap} \cite{gravity} is a
classical theory of all four forces on a manifold consisting of a product of the conventional 4-space
and a finite noncommutative internal space\footnote{reminiscent of the Kaluza-Klein internal space}
including their theory of classical gravity (or isospectral \cite{sap} general relativity) on both
factors of the product space (the equivalence principle on the noncommutative internal space was studied
by Sch\"ucker, \cite{ncg and sm},\cite{forces}.) 

In a quantum (spectral) gravity \cite{essay} on the
discrete component, the Dirac operator would be an observable for gravity with eigenvalues the fermion
masses and the fundamental excitations would be holonomies in internal space. 

Following Crane's
suggestion 
that 
categorification is an important feature in quantisation and that a replacement of the traditional
analysis of the continuum might be replaced by topics in category theory \cite{category qg,qg,clock}, we
infer for those two reasons that a categorification program for noncommutative geometries might be
interesting. To this end, we explore here a connection between finite spectral triples, C*-categories
with an additional item of geometrical data and Fell bundles with path-lifting. Our inspiration comes
from the path integral approach, while Aastrup, Grimstrup and Nest \cite{AG1}, \cite{AG2}, \cite{AGN1},
\cite{AGN2} % these reference probably need updating
have already constructed a quantisation scheme for the product space using loop quantum gravity
techniques. 

There are  already many features of quantum mechanics in the noncommutative standard model:
the Dirac operator is an unbounded operator on a Hilbert space, the fermion sector of the action
resembles an expectation value, the spectral action is a topological invariant and resembles a path
integral\footnote{pointed out by Grimstrup \cite{AG1}} and as we see below, finite spectral triples have
a natural categorical representation.

% Note that Jesper's fundamental idea was to quantise gravity on all the total space together (which I didn't claim to have thought of) noting that the spectral action principle was a spectral and therefore geometrical theory, whereas the idea to quantise gravity on the discrete factor must have been thought of by Connes because he worked out the \emph{spectral} theory of GR on internal space. Barrett (must have?) picked up on those ideas.

In the formalism we construct, both the algebra representing a deformed phase space and the algebra of
functions on the configuration ``space'' are noncommutative and arise from the sectional algebra of a
Banach bundle. Some of the concepts are related to deformation quantisation theories especially the
tangent groupoid quantisation and we draw from geometrical notions inherited from the bundle structure.
In particular, we show that spectral C*-categories are equivalent to a certain class of Fell bundle with
a notion of path-lifting analogous to a path-lifting in a vector bundle.

Other points of view on categorification of spectral triples include  \cite{MarcolliCat,MeslandCat, BCL
mst, BCL Cncg}. In each of these cases, spectral triples are objects and morphisms are constructed
between them. The approach here is different but complementary: we focus on the internal categorical
structure of spectral triples and we present some first steps towards a quantisation program for Connes'
non-commutative general relativity. 

As well as being natural, viewing spectral triples as categories in
themselves (a kind of ``inner categorification'') might provide a tool for algebraic generalisations of
the path integral formulation of quantum gravity wherein the (space-time) manifold is the entire
category and the objects are just its boundaries, so that all the dynamics are formulated by morphisms
within one category. Nevertheless, other topics in categorification of spectral triples \cite{BCL
stm,BCL cqp} also have algebraic quantum gravity motivations and modular spectral triples and
categorical 
spectral triples have a strong intersection with our approach here: a spectral  C*-category aims to
interpret the spectrum of the Dirac operator in terms of the generating operator of a non-commutative
geodesic \cite{essay} and modular spectral triples \cite{BCL mst} and categorical spectral geometries
\cite{BCL Cncg} provide a very similar point of view. We aim to incorporate into their description the
structure of a (Banach bundle) C*-bundle reflecting the bundle structure of a Riemannian spin manifold. 

 \begin{definition}\cite{BCL Cncg}
  In short, a \emph{categorical spectral geometry} is a triple $(\Cs, \Hs, \Ds)$  given by a
pre-C*-algebra $\Cs$, a Hilbert C*-module $\Hs$, a (object bijective) functor $\rho : \Cs \to \B(\Hs)$
and a generator $\Ds$ of a unitary 1-parameter group on $\Hs$ respecting $\B(\Hs)$ with smoothness
condition: $[\Ds, \rho(x)]$ is extendable to a bounded operator on $\Hs$.
 \end{definition}

The definitions we introduce below of spectral C*-category and Fell bundle triple provide
generalisations and clarifications of Fell bundle geometries \cite{sc}, which are slightly more
elaborate containing the extra axioms (reality and Poincar\'e duality) needed to make them into real
spectral triples. Viewing spectral triples as Fell bundle geometries allowed us to make predictions
about the fermion mass matrix - up to the caveat that one does not have available all the technical
details for the final non-commutative standard model. Fell bundle geometries incorporate several
desirable features absent from real spectral triples including: a mass matrix approaching that of the
empirical fermion mass matrix, a bundle structure that reflects that of a Riemannian spin manifold, a
clue to a clearer meaning of orientability in non-commutative geometry and the availability of new
connections between physics and non-commutative geometry such as an application of the tangent groupoid
quantisation.

\subsection{Categorical switch-of-focus}

Isham, Crane and others explain that future progress in quantum gravity must involve mathematical
progress in the understanding of space-time on the smallest scales \cite{Isham,wqg}. Here we explain two
points of view on quantum space-times that underpin the ideas made precise in
this paper using algebraic techniques.

Although it is traditional to pass to a non-commutative generalisation of a topological space by
invoking the Gelfand-Naimark theorem and 
viewing non-commutative algebras as generalised algebras of functions, it is also true
that $C_0(X)$ is the algebra of sections vanishing at infinity of a line bundle over $X$, so there is an
argument to think of
non-commutative algebras as generalised algebras of sections, where \emph{the fuzzy points hide in the
fibres} over (a tangible) $X$ instead of in a fuzzy space ``$X$''. 

%To formalise this idea, consider a
%Banach bundle $\Es^0$ with fibre $\bbc$ over a space $M$. The algebra of functions on $M$ or the
%algebra
%of sections of $\Es^0$ we call the configuration algebra $C^*(\Es^0)$. In the special case that the
%algebra of functions $C^*(X)$ on a space $X$ is $\bbc$ then simply $X = \{pt \}$. In general, the
%Let $\Es^0$ be a fibre bundle.  
%space of fibres of $\Es^0$ is isomorphic to the space $M$. 
%Consider the pair groupoid $\G=M \times M$
Let $(\Es,\pi,\G)$ be a Fell line bundle over a pair groupoid $\G=M \times M$ such that $\G$ is
interpreted as the
deformed tangent bundle over a simply connected compact manifold $M$. Each fibre $\bbc$ is associated 
to a point in $M$. Call the
enveloping algebra $C^*(\Es^0)$ of $(\Es^0,\pi,\G_0)$, the configuration algebra and call the algebra
associated to $\G$, the observable or the coordinate algebra $C^*(M \times M) = C^*(\Es)$. Switch
the focus away from the points in $M$ to the space of fibres of $\Es^0$, so that $\Es$ becomes a
generalised deformed tangent bundle. 

Now generalise the above example so that instead of $\bbc$ the fibres
of $\Es^0$ are simple matrix algebras over $\bbc$. Fibres are not necessarily isomorphic to eachother
but they are Morita equivalent. Now, \emph{$M$ has completely lost its interpretation as the
configuration space}, which is now a virtual space and is formalised only through the space of fibres
of $\Es^0$. A \emph{categorical switch-of-focus} (also appearing in \cite{sc}) $\Ss$ is a
category in which objects are given by the fibres over the objects of $\G$ whose morphisms are elements
of fibres over the morphims $\G_1$ of $\G$. Each object and each arbitrary disjoint union of objects
models a space-time region. \footnote{If $\E$ is saturated, then $\Ss$ is a ``weakened'' form
of groupoid (that is, a groupoid where the inverse and composition and unit axioms hold only up to
isomorphism).
$\Ob(\Ss)$ is a discrete Grothendieck site with morphisms given by unions of fibres. This categorical
structure is closely related to a quantum geometry \cite{wqg,mcqg} but these details may be explored
elsewhere.}

Secondly, in a context of space-time on the smallest scales, tools from traditional
calculus are no longer available (a calculation involving an infinitesimal distance or an
unbounded operator will only
give a physically meaningful answer in the much larger scale laboratorial measurable context) and there
is a need to
reformulate the geometrical data appearing in infinitesimal form (\cite{Isham}), such as a connection on
a vector bundle. 

The following
are three instances in which information of an infinitesimal type can be replaced by finite data through
integration: (i) Barrett and others have shown that given just the holonomy of a bundle, they
were able to reconstruct the entire bundle and connection from that data
(\cite{hol},\cite{axiomatic},\cite{group of
loops}). (ii) A choice of connection on a vector bundle over $M$ is equivalent to a choice of
representation of the fundamental groupoid $\Pi_1(M)$. (iii) The tangent bundle $TM$ over $M$ is
integrated to the $\Pi_1(M)$ (in this case $M$ is simply connected, so $\Pi_1(M)$ is the pair
groupoid over $M$) where infinitesimal data given by the tangent
vectors is replaced by a set of flows. So \emph{when an integration is performed, the following two 
things happen: a 
deformation or non-localisation of the geometry and a categorification of the geometry.}  This reflects
Ehresmann's point of view that geometry is the study of differentiable categories and it implies that
the following are almost synonyms: deforming to a non-commutative algebra, quantisation and inner
categorification. 

Below we demonstrate that a \emph{finite spectral triple is manifestly already a
category and we argue that this means that the non-commutative standard model is not completely
classical.}

\section{Preliminaries}

Because we will be drawing on material from several mathematical disciplines  (and arguing that certain
of their basic ideas can be unified through concepts from mathematical physics), readers may be familiar
with some topics and not others. Therefore we briefly introduce some of the ideas, recall the main
definitions and provide references. 

For an introduction to category theory the reader may wish to consult \cite{cwm}  or refer to several
introductory sources available online such as \cite{oosten}.

\begin{definition}\cite{GLR},\cite{Mitchener}
A \emph{$C^*$-category} is a category $\C$ in which for all objects $A,B\in\Ob_\C$, the homsets $\C_{AB} :=
\mathrm{Hom}_{\C}(B,A)$ are complex Banach spaces, the compositions are bilinear maps such that $\parallel xy
\parallel ~ \leq ~ \parallel x \parallel \cdot \parallel y \parallel$ for all $x \in \C_{AB}$, $y \in \C_{BC}$
and there is an involutive antilinear contravariant functor $* : \C \rightarrow \C$ preserving objects such
that $\parallel x^*x \parallel ~ = ~ \parallel x \parallel^2$ and such that $x^*x$ is a positive element of
the $C^*$-algebra $\C_{AA}$ for every $x \in \C_{BA}$ (i.e. $x^*x = y^*y$ for some $y \in \C_{AA}$).
\end{definition}

Note that each $\C_{AA}$ is a $C^*$-algebra and also that it possesses a unit element due to the identity
axiom in category theory. 

\begin{example}\cite{GLR}
The category of Hilbert spaces and bounded linear maps.   
\end{example}

\begin{example}\cite{GLR}
The category Rep($A$) of representations of a $C^*$-algebra $A$ on a Hilbert space and intertwining
operators. 
\end{example}

\subsection{Groupoids}

First we recall some notation conventions and basic facts. A groupoid $\G$ is a small category in which
all
morphisms or arrows $g$ are invertible and a group is a groupoid with just one object. The set of
objects $\Ob$, which is often referred to as unit space, is denoted $\G_0$ or $\G^0$. The domain (or
source) and range (or target) maps $(d,r):\G \to \G_0$ control the partial composition rule. To each
object in $G_0$ there belongs an identity or unit and the set of arrows $h$ satisfying  $h=g^*g$ can be
identified with the set of units, which due to the ``arrows-only'' picture of categories \cite{cwm}, is
identified with
$\G_0$. In this context, the set of arrows $\G_1$ is identified with $\G$. 
A groupoid may be equipped with a topology $\Omega(\G)$. A global (resp. local) bisection
of a topological groupoid is a section $x: \G_0 \to \G$ of $d$ such that $r \circ x :\G_0 \to \G_0$ is
a
(resp. partial) homeomorphism. Let $\I(\G)$ be the inverse semigroup of local (open) bisections of $\G$
and let $\Bis(\G)$ denote the group of global bisections. An \emph{\'etale} groupoid is a topological
groupoid in which $d$ is a local homeomorphism. An \'etale groupoid has a covering obtained from the
images
of open bisections and the topology generated by them is an involutive quantale $\Omega(\G)$ called an
inverse quantal frame. See \cite{Pedro} for a study of inverse quantal frames. An abstract inverse
semigroup can be can be represented by actions of partial or pseudo homeomorphisms of $M$, see
\cite{Dima} for a study of representations of inverse semigroups.

%If $G$ is a topological
%groupoid, then a bisection $x$ is a continuous assignment of arrows to $G_0$ such that for each $h \in
%G^0$ there is precisely one arrow with range $h$ and precisely one arrow with domain $h$.   

A smooth or Lie groupoid is a groupoid in which both $\G_0$ and $\G$ are manifolds, $d$ and $r$ are
submersions and all category operations are smooth. For $G$ a Lie
groupoid over a space $\G_0 = M$, an identification is often made of an arrow as a path (up to a certain
equivalence) of a point particle in a topological space and then a bisection can be used to describe
the set of paths
in $M$ transcribed by a system of particles. $\Pi_1(M)$ denotes the fundamental groupoid over $M$, which
is the set of homotopy equivalence classes of paths on $M$. In the case that
$M$ is simply connected, $\Pi_1(M) = M \times M$, the
pair groupoid over $M$.  A principal groupoid is one in which
$(r,d):\G \to \G_0 \times \G_0$ is injective, in other words it is an equivalence relation on $\G_0$.
The
pair groupoid $\G=M \times M$ is obviously principal as it is the maximal equivalence relation on
$\G_0$. The bisections of a pair groupoid are in one to one correspondence to diffeomorphisms of the
base.

\subsubsection*{The tangent groupoid quantisation}

Let $M$ be an $n$-dimensional manifold. Recall that the tangent bundle $TM$ and the cotangent bundle
$T^*M$ over $M$ are both Lie groupoids over $M$ locally diffeomorphic to $\mathbb{R}^n \times
\mathbb{R}^n$. As Lie algebroids, $TM$ and $T^*M$ both integrate to the groupoid $M \times M$
\cite{Lectures}. %$T\mathbb{R}^n = %\mathbb{R}^n \times \mathbb{R}^n$.

Here is a very brief overview of the idea of the tangent groupoid. For the
details see \cite{Connes' book}. This is a deformation type quantisation procedure through asymptotic
morphisms. It involves a particle
system on a manifold $M$. 
%(or on the prototypical case of $\mathbb{R}^n$, or even on a more difficult space) 
The cotangent space of $M$ captures the phase space and its $C^*$-completion of sections $C^*(T^*M)$ is
conventionally and classically the algebra of observables. Deformation quantisation programs replace it
with a non-commutative algebra. The tangent groupoid is given by
\begin{displaymath}
 \mathcal{G}M = TM \times \{  0 \}  \cup  M \times M \times (0,1].
\end{displaymath}
where $\hbar$ is a continuous parameter taking values in an interval of the real line $[0,1]$. The
classical limit is obtained as $\hbar$ `goes to zero'. The $C^*$-algebra of the tangent groupoid is the
algebra of continuous sections vanishing at infinity of the union of a norm continuous field of
$C^*$-algebras $A_{\hbar}$ over
the space of $\hbar$s. The asymptotic morphism is a morphism from the algebra $A_0$ over $\hbar = 0$
to any of those over $\hbar \neq 0$. At $\hbar=1$ we have the C*-completion $C^*(M \times M)$ of the
non-commutative convolution algebra of the pair groupoid over $M,$ whose elements extend to the compact
operators on a certain Hilbert space, $L^2(M)$. Again, for details see \cite{Connes' book}.

%The non-commutative algebras over $\hbar \neq 0$ are the convolution algebras of the field of pair groupoids, whose elements extend to the compact operators on the Hilbert space $L^2(M)$. To
%emphasize the role of the groupoid elements as its generators, we will denote these algebra as
%$C^*(M \times M)$.

\subsection{Banach bundles, C*-bundles and Fell bundles}

Banach bundles are different from fibre bundles in that the absence of transition  functions allows them
to be non-locally trivial. Following J. Fell-R. Doran [\cite{Fell Doran} Section I.13] or N. Weaver
[\cite{Weaver}, Chapter 9.1] we have
the following definition of Banach bundle\footnote{Note that our norms are supposed to be continuous.}

\begin{definition}
A \emph{Banach bundle} ($\E, \pi, \X$) is a surjective continuous open map $\pi: \E \rightarrow \X$ such that
$\forall x \in \X$ the fibre 
%+ ##paolo 07-09-2011## 
% added the notation \E_x that we need to use later on 
$\E_x:=\pi^{-1}(x)$ 
%-
is a complex Banach space and satisfies the following additional conditions:

\begin{itemize}
\item 
the 
%+ ##paolo 07-09-2011## 
% changed: operator -> operation 
operation 
%- 
of addition $+ : \E \times \E \rightarrow \E$ is continuous on the set 

%+ ##paolo 07-09-2011## 
% added spaces before/after | 
$\E \times_{\X} \E := \{ (e_1,e_2) \in \E \times \E \ | \ \pi(e_1) = \pi(e_2) \}$,
%-
\item 
the operation of multiplication by scalars: $\bbc \times \E \rightarrow \E$ is continuous,
\item 
the norm 
%+ ##paolo 07-09-2011## 
% changed: . -> \cdot 
$\parallel \cdot \parallel : \E \rightarrow \mathbb{R}$ is continuous,
%- 
\item 
for all $x_0 \in \X$, the family $U_{x_0}^{\mathcal{O},\epsilon} = \{e \in \E \ | \ \parallel e \parallel <
\epsilon, \pi(e) \in \mathcal{O} \}$ where 
$\mathcal{O} \subset \X$ is an open set containing $x_0 \in \X$ and $\epsilon > 0$ is a fundamental system of
neighbourhoods of 
%+ ##paolo 07-09-2011## 
% changed: \mathcal{O} -> 0 
% this is the zero element of the vector space \E_x 
$0 \in \E_{x_0}$.    
%- 
\end{itemize}
For a \emph{Hilbert bundle} we require that for all $x \in \X$, the fibre $\E_x$ is a Hilbert space. 
\end{definition}
  
\begin{definition}
 A \emph{C*-bundle} is a Banach bundle $\E^0$ in which each fibre is  a C*-algebra and there is a
C*-completion $C^*(\E^0)$ of the algebra of compactly supported sections of $\E^0$.
\end{definition}

A Fell bundle over a topological groupoid (for which we follow \cite{fbg}) is a generalisation of a Fell bundle over a
group (\cite{Fell Doran}), and also a generalisation of a $C^*$-bundle over a topological space:-

\begin{definition} \cite{fbg}   \label{defining list}
A Banach bundle over a groupoid $p: \E\rightarrow \G$ is said to be a \textit{Fell bundle} if there
is a
continuous multiplication $\E^2 \rightarrow \E$, where
\begin{displaymath}
 \E^2 = \{(e_1,e_2)\in \E \times \E \ | \  (p(e_1),p(e_2)) \in \G^2\},
\end{displaymath}
(where $\G^2$ denotes the space of composable pairs of elements of $\G$) and an involution $e
\mapsto
e^{\ast}$ that satisfy the following axioms. 
%+ ##paolo 07-09-2011## 
%($\E_{\gamma}$ is the fibre $p^{-1}(\gamma)$).
%-

\begin{enumerate}
\item 
$p(e_1e_2) = p(e_1)p(e_2) \quad \forall (e_1,e_2) \in \E^2$;
\item 
The induced map $\E_{g_1} \times \E_{g_2} \rightarrow  \E_{g_1 g_2}$,\quad 
$(e_1,e_2) \mapsto e_1e_2$ is bilinear $\forall (g_1, g_2)\in \G^2$;  \item
$(e_1e_2)e_3=e_1(e_2e_3)$ whenever the
multiplication is defined; 
\item $\parallel e_1e_2 \parallel \leq \parallel e_1 \parallel \cdot \parallel e_2 \parallel, \quad \forall
(e_1,e_2) \in \E^2$;
\item $p(e^{\ast})=p(e)^{\ast},\quad \forall e\in \E$;
\item The induced map $\E_{g} \rightarrow \E_{g^{\ast}}, \quad e \mapsto e^{\ast}$ is conjugate
linear for all $g \in \G$;
\item $e^{\ast \ast} = e, \quad \forall e \in  \E$;
\item $(e_1e_2)^{\ast} = e_2^{\ast} e_1^{\ast},\quad \forall (e_1,e_2) \in \E^2$;
\item $\parallel e^{\ast} e\parallel =\parallel e \parallel ^2, \quad \forall e \in \E$;
\item $\forall e \in \E, \ e^{\ast} e \geq 0$ as element of the C*-algebra $\E_{p(e^*e)}$. 
\end{enumerate}
\end{definition}

The restriction of a Fell bundle to $\G_0$ is a C*-bundle $\E^0$ and its algebra $ C^*(\E^0)$ is called
the diagonal algebra of the Fell bundle. 

A Fell bundle is said to be \textit{saturated} if 
%+ ##paolo 07-09-2011## 
% changed . -> \cdot
$\E_{g_1}\cdot \E_{g_2}$ is total
%-
in $\E_{g_1 g_2}$ for all $(g_1, g_2)~\in~\G^2$ from which it follows that the
fibres
are imprimitivity or
Morita equivalence bimodules. A \textit{unital Fell bundle} is one in which every $C^*$-algebra has an
identity element. A \emph{Fell line bundle} is a Fell bundle with fibre $\bbc$.

A saturated unital Fell bundle over a maximal equivalence relation $\G = M \times M$ or pair groupoid on
a topological space $M$ is equivalent to a full C*-category $\C$ fibred over the maximal equivalence
relation on its object space $\Ob(\C)$. (See \cite{BCL Imp}.)

\begin{example} \label{Morita}
Consider a saturated Fell bundle $\E$ over the pair groupoid on two objects $\G \ni g, g^*,
gg^*, g^*g$. Since the fibres are imprimitivity or Morita equivalence bimodules the
$C^*$-algebra of sections or `enveloping' algebra  of the  Fell bundle $C^*(\E)$ is a Morita equivalence
`linking algebra'.  For linking algebras and related results on Morita equivalence see
\cite{BrownGreenRieffel}, \cite{Joita}. The fibre space of $\E$ defines a \emph{Morita category} $\C$, a
 full small C*-category. The objects $\Ob(\C)$ are a Morita equivalence class of C*-algebras. This can
be thought of as a weakened form of a groupoid since $\E_{g} \otimes \E^*_{g} \cong \E_{g
g^{\ast}}$ and $\E_{g} \otimes \iota_{(\E_{g})} \cong \E_{g}$.

The object $C^*$-algebras are denoted $\E_{gg^{\ast}}$ and $\E_{g^{\ast}g}$ and
using
lower case for their elements, and element $e$ of the linking algebra is given by:
\begin{equation}
e = \left(   \begin{array}{cc}
e_{gg^{\ast}} &    e_{g}     
\\
e_{g^{\ast}}       &     e_{g^{\ast} g}
\end{array}  \right)
\end{equation}
\end{example}

\begin{example} 
The convolution algebra of the principal groupoid $\G = M \times M$ over a discrete
space $M$ consisting of $n$ points is given by the matrix algebra $M_n(\bbc)$. $M_n(\bbc)$ is the the
sectional algebra of the complex line bundle $\E$ over the groupoid $\G$ and is the linking algebra of
the Morita equivalence bimodules whose elements are the morphisms in the category $\E$.
\end{example}

The following definition and example is an extract from \cite{C*diag}.
\begin{definition}
 Suppose that $\A$ is a C*-subalgebra of a C*-algebra $\B$. An element $b \in \B$ is said to normalise $\A$ if 
 \begin{itemize}
  \item $b^* \A b \subset \A$ 
  \item $b \A b^* \subset \A$ 
 \end{itemize}
The collection of all such normalisers is denoted $N(\A)$. Evidently, $\A \subset N(\A)$; further, 
$N(\A)$ is closed under multiplication and taking adjoints. A normaliser, $b \in N(\A)$ is said to be
\emph{free} if $a^2=0$. The collection of free normalisers is denoted $N_f(\A)$.
\end{definition}

\begin{example}
 Let $\B = M_n(\bbc)$, the algebra of complex $n$ by $n$ matrices. Choose a   set of matrix units, $\{
e_{ij} : 1\leq ~ i,j ~ \leq n \}$ (one has $e_{ik} = e_{ij}e_{jk}$ and $e_{ij}^*=e_{ji}$), and let $\A$
denote the diagonal subalgebra (viz, $\A$ is spanned by the $e_{ii}$s). Then $a = \Sigma \lambda_{ij}
e_{ij}$ normalises $\A$ if and only if for each $i$, $\lambda_{ij} \neq 0$ for at most one $j$, and for
each $j$,  $\lambda_{ij} \neq 0$ for at most one $i$ (i.e., at most one entry is non-zero in each row
and column). If $i \neq j$, $e_{ij} \in N_f(\A)$.  
 Let $P : \B \to \A$ be given by:
 \begin{equation*}
  P(a) = \Sigma e_{ii} a e_{ii}.
 \end{equation*}
This defines a faithful conditional expectation for which:
\begin{equation*}
 \mathrm{ker} P = \mathrm{span} N_f(\A).
\end{equation*}
\end{example}

\subsection{Real spectral triples}

In short, a real spectral triple \cite{Connes' book} is a triple $(\A, \H, \D)$ where $\A$  is a
pre-C*-algebra with  a faithful representation on a Hilbert space $H$, the Dirac operator $\D$ is a
self-adjoint, unbounded operator on $\H$ with compact resolvent such that $[\D,a]$ is a bounded
operator for all $a \in \A$. 
%(in the commutative case this is the set of square integrable sections of the spinor bundle)
$\H$ is a left $\A \otimes \A^{opp}$-module (finite projective) with a real structure $J$ and
$\mathcal{Z}_2$-grading $\chi$. A real structure on a spectral triple \cite{reality} is given by an
antiunitary operator $J$ on $H$ such that $J^2 = \pm 1$, $\D J= \pm J \D$, $[a,b^{opp}]=0$,
$[[\D,a],b^{opp}] = 0$ where $b^{opp} = Jb^*J^*$ for all $b \in \A$ is an element of $\A^{opp}$. Real
spectral triples are Connes's noncommutative generalisations of Riemannian spin manifolds, to which
purpose $\A$, $\H$, $\D$, $J$ and $\chi$ must satisfy a set of 7 axioms detailed in \cite{gravity} such
as Poincar\'e duality and orientability. These axioms were designed to be adapted over time.

Connes's reconstruction theorem establishes that commutative real spectral triples are
equivalent to Riemannian spin manifolds, so a commutative Riemannian spin manifold is just a
special case of the set of all not-necessarily-commutative Riemannian spin manifolds. 

%The reconstruction
%theorem involves finding any or general Dirac operator $D$ satisfying the noncommutative geometry
%axioms (\cite{gravity}) and checking that the solutions to the equation of motion for $D$ is the
%Dirac operator $D_0$ on a Riemannian spin manifold. 

%General $D$ is a sum of $D_0$ with a torsion term:

%\begin{equation}
%  D = D_0 + D_{\mathrm{torsion}}
%\end{equation}

%The solution to the equation of motion of $D$ is $D_0$.

\subsubsection*{Fluctuations of the metric}

In \cite{forces} Sch\"ucker explains that while Einstein derived general relativity from Riemannian
geometry, Connes extended this to noncommutative geometry and as a result the other three
fundamental forces emerged with the gauge and Higgs fields as fluctuations of the metric. The
`almost commutative' spectral triple of the noncommutative standard model includes a commutative
space-time factor and a finite noncommutative factor. The latter is reminiscent of Kaluza-Klein
internal space, but in this case it is 0-dimensional. Connes' encodes the metric data in the Dirac
operator \cite{gravity} and his procedure for unification starts by describing the diffeomorphisms
giving rise to the equivalence principle as the spinor lift of the automorphisms of the algebra
transforming the Dirac operator.  The arising space of fluctuated Dirac operators $\D^f$  (\cite{ncg
and sm}) defines the configuration space of the spectral action:-
\begin{equation} \label{fluctuations}
  \D^f = \sum_{\textrm{finite}} r_j L (\sigma_j) \D L(\sigma_j)^{-1}, ~~r_j \in \mathbb{R}, ~~ \sigma_j
\in \textrm{Aut}(\A)
\end{equation}
where $L$ is the double valued lift of the automorphism group to the spinors. For the calculation of
the spectral standard model action see \cite{sap}. The result is the general form of the Dirac
operator with arbitrary curvature and torsion:
\begin{equation} \label{general D}
 \D =  \sum_i c_i \Big(\frac{\partial}{\partial x}_i + \omega_i \Big)
\end{equation}
The  `almost commutative' algebra of the noncommutative standard model comprises two factors,
 $\A = C^{\infty}(M) \otimes \A_{\mathrm{finite}}$ and `fluctuating' the Dirac operator that probes
 $M$ in the finite  space algebra $\A_F$, Connes obtains the standard model gauge
fields. In this noncommutative case, this means replacing the spinor lift in the above formula with
the unitaries of $\A_{\mathrm{finite}}$. Finally, fluctuating the Dirac operator
$\D_{\mathrm{finite}}$ that probes the finite space in $\A_{\mathrm{finite}}$ gives rise to the Higgs
field. This gives the Higgs field an interpretation as a gravitational connection on an additional
`dimension'.

\section{Finite spectral triples}

The following definition is an interpretation for the finite complex case of an even spectral  triple
\cite{Connes' book} incorporating the fact (\cite{Harti}) that an inner derivation $\delta_{\D}: a
\mapsto
[\D,a]$ from a Banach algebra $\A$ maps into a module $\B$ over $\A$ ($a \in \A$, $[\D,a] \in \B$). 

\begin{definition}
 A \emph{finite even spectral triple} $(\A, \H, \D_f, \gamma)$ over $\bbc$ consists of a  finite
dimensional complex algebra $\A = \bigoplus_i M_{n_i}(\bbc)$, $i = \{ 1...p \}$ and an enveloping
algebra $\B = M_{n_m}(\bbc)$, both represented (faithfully unless stated otherwise) on a Hilbert space
$\H$ isomorphic to $\bbc^{n_m}$, $m = \sum_{i=1}^{p}$. The triple includes a self-adjoint operator
$\D_f$ giving an inner derivation from $\A$ into $\B$ such that $\D_f \gamma + \gamma \D_f = 0$ where
$\gamma$ is a $\mathbb{Z}/2$-grading operator satisfying $\gamma^*=\gamma$,  $\gamma^2=I$, and with $a
\gamma - \gamma a = 0 $ for all $a \in \A$.
 
 %given by $(I_{n_i}, -I_{n_j}, I_{n_k}, -I_{n_l})$,

% or in a modification, by one of the following matrices, $(I_{n_i}, -I_{n_j}, I_{n_k}, -I_{n_l},...,-I_{n_p})$, $(-I_{n_i}, I_{n_j}, -I_{n_k}, I_{n_l},...,I_{n_p})$, or $(I_{n_i}, I_{n_j}, I_{n_k}, I_{n_l},...,I_{n_p})$ 

\end{definition}
 
Observe that in examples, $p$ tends to be the number 4 since fermions  are predicated by chirality and
parity (they are always labelled either right or left and whether they are particles or
anti-particles). 

It is clear that the set of summands of $\A$ fall into a Morita equivalence class of non-commutative simple algebras.

The following real structure allowed Connes to define the analogue of a Riemannian  spin manifold as a
real spectral triple together with a set of 7 axioms (see \cite{gravity}).

\begin{definition} \cite{reality}
 A \emph{real} spectral triple is a spectral triple $(\A, \H, \D, \gamma, J)$  (defined in \cite{Connes'
book,gravity}) in which the Hilbert space carries a real structure, making it into an $\A-\A^{opp}$
bimodule, which is an anti-linear operator $J$ satisfying $\D J = J\D$, $J^2=1$, $J=J^*=J^{-1}$, $JaJ =
b$,
$[J, \gamma] =0$ 
 $\forall a$ with $b \in \A^{opp}$, 
\begin{displaymath}
   J \binom{ \psi_1}{\bar{\psi_2}}=\binom{\psi_2}{\bar{\psi_1}}
        \quad (\psi_1,\bar{\psi_2}) \in  \H= \mathcal{H}_1 \oplus \mathcal{H}_2
\end{displaymath}
where the bar indicates complex conjugation.
\end{definition}

\begin{definition}
Let $\H = \mathcal{H}_1 \oplus \mathcal{H}_2$ be a finite dimensional Hilbert space  with a further
chiral decomposition or $\mathbb{Z}/2$-grading $\mathcal{H}_1 = \mathcal{H}_L  \oplus \H_R$, 
$\mathcal{H}_2 = \H_{2L} \oplus \H_{2R}$.
 A \emph{finite real spectral triple} is a finite spectral triple as defined above  with a real
structure and satisfying the Poincar\'e duality condition: $\dim \H_R - \dim \H_L \neq 0$.
\end{definition}

\begin{remarks}
 \begin{enumerate}
  \item 
The form the chirality grading operator $\gamma$ takes depends on the KO-dimension (in physical examples
 this translates into for example whether the signature is Lorentzian or Euclidean). In the definition
we used above we have $J \gamma = \gamma J$ which corresponds to the Euclidean signature.  In \cite{lng}
and \cite{snm} for example the chirality operator is defined by a matrix solving the constraints
$\gamma^* = \gamma$, $\gamma^2=I$ and $J \gamma = \gamma J$ corresponding to the Lorentzian signature.
\item 
The equation $\D_f \gamma + \gamma \D_f = 0$ was included in Connes' original axioms\footnote{Connes'
original intention was for fluidity, not to set axioms in stone.} as a condition to ensure a reflection
of the orientability of a Riemannian spin manifold. However, there may be examples of
twisted Fell line bundles \cite{C*diag,Renault} that may give rise to non-orientable Fell bundle
triples (see later) that still satisfy the condition. A discussion on modifying the orientability axiom
is included in \cite{Christoph}.
 \end{enumerate}
\end{remarks}

 %The equation $\D_f \gamma + \gamma \D_f = 0$ was included in Connes' original axioms as a condition to ensure a reflection of the orientability of a Riemannian spin manifold. However, later we will see examples of spectral triples constructed from Fell bundles, for which we do not assume orientability. In particular, there are twisted Fell line bundles \cite{C*diag,Renault2008} which are obviously not orientable but they do satisfy this condition. Stephan has also discussed modifying the orientability axiom \cite{Christoph}.

%The form the grading operator $\gamma$ takes depends on the KO-dimension \cite{lng} (in physical examples this translates into for example whether the signature is Minkowski or Euclidean). See also \cite{Christoph} for a discussion on this. Since the orientability axiom at the time of writing is still fluid, we relax the definition of finite even spectral triple as follows. In our implementation of a finite spectral triple we allow the matrix $\gamma$ to be chosen from the following set with $\gamma*=\gamma$ and $\gamma^2=I$. In the Euclidean case we have $\gamma J=J \gamma$. Spelling this out for the example with $p=4$, $\gamma = (I_{n_i}, -I_{n_j}, I_{n_k}, -I_{n_l})$. In the Minkowskian signature, we have $\gamma J = - J \gamma$ which with $p=4$ gives $\gamma = (I_{n_i}, -I_{n_j}, -I_{n_k}, I_{n_l})$. 

The following condition was put in by hand to make  examples respect the phenomenology\footnote{this was
to avoid colour symmetry breaking which is unacceptable as for example it predicts that photons have
mass},

\begin{definition} \cite{reality}, \cite{gravity} 
 An $S^o$-real finite spectral triple is one in which the  Hilbert space of dimension $2l$ carries an
additional grading with operator $\epsilon$ with eigenvalues $(I_l,-I_l)$ where $[\mathcal{D},
\epsilon]=0$, $[J,\epsilon]_+=0$ (if the triple has a real structure), $\epsilon^{\ast} = \epsilon$,
$\epsilon^2=1$.
\end{definition}

\begin{lemma}
 The Dirac operator $\D_f$ for a real, $S^o$-real finite  spectral triple (up to Poincar\'e duality)
with $p=4$ \cite{gravity} \cite{sap} is given by the $n_m$ by $n_m$ matrix,

\begin{equation}        \label{ad hoc choice}
\D_f =
\left(   \begin{array}{cccc}
0         &         M^{\ast}    &   0        &   0\\
M         &          0          &   0        &   0\\
0         &          0          &   0        & M^T\\
0         &          0          &   \bar{M}  &  0
\end{array}  \right)
\end{equation}
where $T$ denotes transposition.\end{lemma}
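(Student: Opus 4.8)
The plan is to reconstruct the block form of $\D_f$ directly from the axioms governing a real, $S^o$-real finite even spectral triple, essentially by solving the linear constraints. First I would fix the decomposition of $\H$ dictated by the gradings: the $S^o$-grading $\epsilon$ with eigenvalues $(I_l,-I_l)$ splits $\H = \H_1 \oplus \H_2$ as a particle/antiparticle decomposition (this is the $2l$-dimensional space of the $S^o$-real definition), and the real structure $J$, being antiunitary with $J^2 = 1$ and anticommuting with $\epsilon$, must swap the two summands; in the basis given in the reality definition it acts by $J(\psi_1 \oplus \bar\psi_2) = \psi_2 \oplus \bar\psi_1$. I would then impose the chiral grading $\gamma$, which commutes with $J$ and $\epsilon$ but anticommutes with $\D_f$, refining each of $\H_1,\H_2$ into left/right pieces as in the preceding definition.

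Next I would write $\D_f$ as a $2\times 2$ operator matrix with respect to $\H = \H_1 \oplus \H_2$. Self-adjointness forces the diagonal blocks to be self-adjoint and the off-diagonal blocks to be mutual adjoints. The key structural input is $\D J = J\D$: conjugating a general block matrix by the flip-and-conjugate action of $J$ and matching entries shows that the $\H_2$-block of $\D_f$ is the complex conjugate of the $\H_1$-block, and (since the relevant cross terms are suppressed — see below) the off-diagonal blocks between $\H_1$ and $\H_2$ vanish. Within the $\H_1$-block, the anticommutation $\D_f\gamma + \gamma\D_f = 0$ together with $[\gamma,a]=0$ and the fact that $\D_f$ gives an inner derivation into $\B$ forces the $\H_1$-block to be off-diagonal in the $\H_L \oplus \H_R$ splitting, i.e. of the form $\begin{pmatrix} 0 & M^* \\ M & 0\end{pmatrix}$ for some matrix $M$; applying the $J$-induced conjugation then gives the $\H_2$-block as $\begin{pmatrix} 0 & M^T \\ \bar M & 0 \end{pmatrix}$, since $\overline{M^*} = M^T$. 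Assembling the four blocks yields exactly the displayed matrix (\ref{ad hoc choice}).

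The main obstacle I anticipate is justifying that the $\H_1$--$\H_2$ cross blocks of $\D_f$ are zero, i.e. that $\D_f$ does not mix particles and antiparticles. This is not a formal consequence of self-adjointness plus $\D J = J\D$ alone — a general $J$-commuting self-adjoint operator can have nonzero cross blocks. The vanishing comes from the combination of the $S^o$-condition $[\D,\epsilon]=0$ (which already kills the cross blocks outright, since $\epsilon$ is exactly $I$ on $\H_1$ and $-I$ on $\H_2$) together with the orientability/anticommutation condition and the requirement that $[\D_f,a]$ land in the bimodule $\B$; so the cleanest route is to invoke $[\D,\epsilon]=0$ first to block-diagonalise along the particle/antiparticle split, and only then bring in $\gamma$ and $J$. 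I would also need to be slightly careful that the labelling conventions ($M$ versus $M^*$, and which chirality sits where) match the physics normalisation of \cite{gravity,sap}; this is a matter of consistent bookkeeping rather than a genuine difficulty. The remaining steps — that each $2\times 2$ chiral sub-block is genuinely off-diagonal, and that the parameter is a single matrix $M$ rather than two independent ones — follow from $\D_f\gamma+\gamma\D_f=0$ and $\D J = J\D$ respectively, and are routine once the global block structure is in place.
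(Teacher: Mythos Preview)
Your proposal is correct and follows exactly the approach the paper indicates: the paper's own proof is a single sentence saying that the form of $\D_f$ follows by applying the matrix constraints involving $\D$, $\gamma$ and $J$ from the preceding definitions (with a citation to \cite{smv} for the details), and what you have written is precisely that computation spelled out. Your identification of $[\D,\epsilon]=0$ as the cleanest way to kill the $\H_1$--$\H_2$ cross blocks, followed by $\D_f\gamma+\gamma\D_f=0$ for the chiral off-diagonality and $\D J = J\D$ for the conjugation relation between the two diagonal blocks, is the right order of operations and matches the intended argument.
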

 (This result is the same for either signature. In the Euclidean non-commutative  standard model is
associated with the fermion double problem. This was solved in \cite{lng} on the switch to the
Lorentzian signature.) 
(Caveat: Such a triple does not satisfy the Poincar\'e duality axiom because $M$ (mass matrix) is a 
square matrix. To rectify this, we only need to declare that $M$ include a column of zeroes and formally
delete one of basis vectors but this is a step that can always been done at the end of a calculation.)

\begin{proof}
 This is shown by applying each of the matrix conditions set out included in  the definitions above
involving the operators $\D$, $\gamma$ (for either signature) and $J$, \cite{smv}.
\end{proof}

\section{Spectral C*-categories}

%Should insert definition of a continuous groupoid bisection. Local and global bisections. Group of global continuous bisections is a topological group. \cite{Rogier's thesis}

% A bisection of $\G = M \times M$ is a a homeomorphism of $M$ so the group of bisections of $\G$ is the group Homeo($M$).

The next definition is similar to categorical spectral geometries by Bertozzini, Conti and
Lewkeeratiyutkul that we quoted in the introduction and they are a first building block of a Fell bundle
geometry \cite{sc}. The main difference compared to categorical spectral geometries is that we are
presently only dealing with the bounded operators.

Recall that in a category $\C$, a section (or coretraction) $g$ of a morphism $f: A \to B$ is a right
inverse for it, $f \circ g = \id_B$. Let $\Fs:\C \to \C$ be an endofunctor and let $\Gs$ be a section
for $\Fs$. By this we mean that $\Fs \circ \Gs = \id_{\C}$. Let $r$ and $d$ denote the range and domain
map of $\C$. In analogy with a bisection of a groupoid, not least, the groupoid given by the cotangent
bundle over a manifold, we equip a full C*-category with an additional piece of data\footnote{this data
will attain a geometrical interpretation}:-

\begin{definition}
 A \emph{spectral C*-category} is a small full C*-category $\C$ equipped with a continuous global
self-adjoint section (or coretraction) $\sigma$ of its domain map such that $r \circ \sigma :
\Ob(\C) \to \Ob(\C)$ is bijective.
%and its range map $(d,r) : \C \to \Ob(\C)$.
\end{definition}

Given a full C*-category $\C$, the set of all such continuous self-adjoint sections $\sigma$ of the
domain map will be denoted $N(\C)$.

\begin{remark}
 There is a groupoid isomorphism from the maximal equivalence relation $\G = M \times M$ defined on the
space of objects of $\C$ to the Picard groupoid of a full C*-category $\C$. (See \cite{BCL Imp}.)
$\sigma$ can be described as a set of elements of the imprimitivity bimodules arising from a bisection
of this Picard or ``weak'' groupoid.
\end{remark}

\begin{example}
 Recall from \cite{BCL Imp} that a full C*-category is a category of  Morita equivalence bimodules, so
the objects form a Morita equivalence class and the homsets constitute imprimitivity bimodules. Observe
that if $\C$ is a finite dimensional C*-category, that is, its object algebras are matrix algebras and
morphisms are given by elements of the linking algebra obtained from the Morita equivalence bimodule
structure of the category (see \cite{Rieffel Morita,Joita}). Define the \emph{diagonal} algebra of $\C$
to be $\A = \bigoplus_i A_i$ where each $A_i$ is an object of $\C$. Then $\sigma$ is defined by a
(self-adjoint) matrix in which there appears precisely one non-zero block in each row and column of
blocks. 
\end{example}

\begin{example}
 Non-commutative Fell bundle geometries defined in \cite{sc} are variants of spectral  C*-categories
with the additional axioms of reality structure and Poincar\'e duality built in. All we know from
spectral triple theory and the non-commutative standard model is that the Dirac operator takes the form
\ref{ad hoc choice} where the mass matrix $M$ is just a general matrix over $\bbc$. However, with the
non-commutative standard model's finite spectral triple viewed as a Fell bundle geometry, we can make
use of the additional mathematical structures available from the Morita category and the result is a
Dirac operator $\sigma$ that is beginning to attain a much closer resemblance to the fermion mass matrix
constructed from empirical data. For details see \cite{preprint}. Note also that although this has not
been studied in detail, this formalism does not automatically preclude Majorana masses, it merely
ensures that there is one non-zero block in each row and column of blocks in $\sigma$, which means that
a fermion is allowed either a Dirac mass or a Majorana mass but not both. On the
other hand, in \cite{unitaryinvariant} Connes constructs a complete invariant of Riemannian geometries
and then (quoting from his paper) he shows that his new invariant played the same role with respect to
the spectral invariant as the Cabibbo-Kobayashi-Maskawa (CKM) matrix in the Standard Model plays with
respect to the list of masses of the quarks.
 \end{example}

\begin{remark}
If $p$ and $q$ are operators on an infinite dimensional Hilbert space $\H$, then the Heisenberg
commutation relation $pq -qp =i \hbar$ is not satisfied whenever $p$ and $q$ are bounded operators. For
this
reason, an observable given by a function of $p$ and $q$ is always given by an unbounded self-adjoint
operator. It is often
considered unfortunate that one has to resort to the mathematical convenience of formally integrating
observable operators in order to obtain an algebra of bounded operators so that in particular, one may
use techniques from C*-algebras. On the other hand, from the point of view of a quantum space-time,
where the Hilbert space must be finite dimensional and where the geometry is of a non-local nature
(we cannot accommodate the notion of an infinitesimal in a mathematical framework modelling the smallest
things that exist \cite{Isham}), a bounded observable operator is contextual.
\end{remark}

\subsubsection*{The set $N(\C)$}

\begin{proposition}
 (a) Let $\C$ be a small full C*-category. The set $N(\C)$ is an  involutive monoid. Let $N_i(\C)$
denote the group of invertible elements of $N(\C)$. (b) Let $\C$ be a small full C*-category with $p$
objects given by simple finite dimensional complex algebras $A_i = M_{m_i}(\bbc)$, $i= \{ 1..p \}$, $n =
\sum_i^p m_i$. The set $N(\C)$ is equal to the normalising set $N(\A)$ where $\A$ is the ``diagonal
algebra'' $\A = \bigoplus_i M_{m_i}(\bbc)$. 
\end{proposition}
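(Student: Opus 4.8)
The plan is to handle the two assertions separately, since (a) is a purely categorical statement about arbitrary small full C*-categories and (b) is a concrete linking-algebra computation.

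For part (a), I would first check that $N(\C)$ is closed under the composition of sections. Given $\sigma, \tau \in N(\C)$, both are global sections of the domain map $d$, so $d \circ \sigma = \id_{\Ob(\C)}$ and likewise for $\tau$; the composite section is not literally $\tau \circ \sigma$ but rather the ``groupoid-style'' product, namely the section $\rho$ sending an object $A$ to $\sigma(A)\cdot\tau(r\sigma(A))$ (composable because $d(\tau(r\sigma(A))) = r\sigma(A)$). One checks $d\rho = \id$, that $r\rho = (r\tau)\circ(r\sigma)$ is a bijection of $\Ob(\C)$ since it is a composite of bijections, and that $\rho$ is continuous because multiplication in the bundle / C*-category is continuous and $r$ is continuous. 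The identity element of the monoid is the section $A \mapsto \id_A$ (the unit of the C*-algebra $\C_{AA}$, which exists by the remark following the C*-category definition), whose associated $r$-map is $\id_{\Ob(\C)}$. Associativity of this multiplication follows from associativity of composition in $\C$. The involution is $\sigma \mapsto \sigma^*$, defined objectwise by $\sigma^*(A) = \sigma(r^{-1}\sigma^{-1}... )$ — more precisely one sets $\sigma^*$ to be the section obtained by applying $*$ to each $\sigma(A)$ and reindexing by the inverse bijection $(r\sigma)^{-1}$; this is self-adjoint-compatible, continuous (the $*$-functor is continuous), and satisfies $(\sigma\tau)^* = \tau^*\sigma^*$ by the contravariance of $*$. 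Hence $N(\C)$ is an involutive monoid, and its invertible elements form a group $N_i(\C)$ automatically.

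For part (b), the key is the identification (stated earlier, following Definition~\ref{defining list} and in the Example on Morita categories, referencing \cite{BCL Imp}) that a finite full C*-category with object algebras $A_i = M_{m_i}(\bbc)$ is the same data as the linking algebra $\A = \bigoplus_i M_{m_i}(\bbc)$ sitting as the diagonal of $M_n(\bbc)$, $n = \sum_i m_i$, with the homset $\C_{A_iA_j}$ realised as the $(i,j)$ off-diagonal block. Under this identification a global self-adjoint section $\sigma$ of $d$ is exactly a choice, for each $i$, of one morphism $\sigma_i \in \C_{A_{\pi(i)}A_i}$ with $r\sigma$ the permutation $\pi$ — i.e.\ a self-adjoint element of $M_n(\bbc)$ with exactly one nonzero block in each block-row and each block-column, which is precisely the description in the Example preceding the ``$N(\C)$'' subsection. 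I would then invoke the matrix-units Example from \cite{C*diag}: an element of $M_n(\bbc)$ normalises the (block-)diagonal subalgebra $\A$ iff it has at most one nonzero block per block-row and block-column; intersecting with self-adjointness, and noting that the section condition forces ``exactly one'' (not merely ``at most one'') so that $r\sigma$ is a genuine bijection, gives $N(\C) = N(\A) \cap \{\text{self-adjoint}\}$. If the paper intends $N(\A)$ here to already incorporate self-adjointness (matching its own definition of $\sigma$ as self-adjoint), the two sets coincide on the nose; I would state this compatibility explicitly.

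The main obstacle is bookkeeping in part (a): getting the reindexing in the definition of the product section $\rho$ and of the involution $\sigma^*$ exactly right, so that the ``section of $d$'' property and the bijectivity of the associated $r$-map are preserved, and so that the monoid axioms genuinely reduce to the groupoid-like axioms of the Picard groupoid (as flagged in the remark after the definition of spectral C*-category). The continuity clauses are routine once one knows multiplication, $*$, $d$, and $r$ are continuous. In part (b) the only subtlety worth a sentence is the ``exactly one versus at most one block'' point: a bare normaliser of $\A$ may kill an entire block-row, but the section/bijectivity hypothesis on $\sigma$ rules that out, so the surjection $N(\C) \hookrightarrow N(\A)$ is genuinely onto the self-adjoint normalisers whose block-support meets every row and column.
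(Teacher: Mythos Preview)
Your proposal follows essentially the same route as the paper's own proof: part (a) via closure of sections under the Morita-category composition and the $*$-involution, with the identity section as unit; part (b) via Kumjian's block-matrix characterisation of normalisers of the diagonal subalgebra. You are considerably more explicit than the paper --- your groupoid-style product $\rho(A)=\sigma(A)\cdot\tau(r\sigma(A))$, and your discussion of the ``exactly one versus at most one nonzero block'' and self-adjointness compatibility issues, spell out subtleties that the paper's proof leaves implicit.
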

\begin{proof}
 (a) Recall from \cite{BCL Imp} that a full C*-category is a category of Morita equivalence bimodules,
so the objects form a Morita equivalence class and the homsets constitute imprimitivity bimodules. Note
that the object set $\Ob(\C)$ is closed under direct sums. Therefore multiplying elements $b, b^*,c$ in
$N(\C)$ arised from the composition in the Morita category, hence $b^*b \in \Ob(\C)$, $bc \in \C$ and so
$N(\C)$ must be an algebraically closed set and is also of course closed under taking adjoints since
$\C$ is an involutive category. The unit is just the section of $d$ obtained from the units belonging to
all the objects of $\C$.
 (b) Kumjian's definition of the normaliser of a C*-algebra $\A$ was quoted earlier in the 
preliminaries section. This situation closely resembles the finite dimensional example we also quoted
under the definition except that here the diagonal algebra $\bigoplus_i \bbc_i$ is replaced by a larger
matrix algebra. Now the proof is clear from the observation that a section of the domain $d$ of $\C$
such that $r \circ \sigma : \Ob(\C) \to \Ob(\C)$ is bijective, is
a matrix in which at most one matrix block is allowed to be non-zero in each row and in each column.
(Earlier examples and remarks help to clarify the situation.)
\end{proof}

\begin{lemma}
 There is a group homomorphism from the group of global  bisections $\Bis(\G)$ of the pair groupoid $\G
= \Ob(\C) \times Ob(\C)$ to the group of unitary $u$ (invertible partial isometries) normalisers
$N_u(\A)$ and a *-functor $\pi : \E \to \G$ such that $\pi \circ u = \id_{G}$ for all $u \in N_u(\A)$.
\end{lemma}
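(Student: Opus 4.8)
The plan is to build the homomorphism and the functor essentially by the same bookkeeping that appears in Proposition preceding this lemma, specialised to the unitary/invertible case. First I would unpack the identification $\G = \Ob(\C) \times \Ob(\C)$ and recall from \cite{BCL Imp} that $\C$ is a category of Morita equivalence bimodules, so a global bisection $\beta \in \Bis(\G)$ is a set-theoretic permutation $r\circ\beta$ of the objects paired with, for each object $A$, a distinguished arrow of $\C$ lying over the pair $(r(\beta(A)), A)$. Since $\C$ is full, the imprimitivity bimodule $\C_{r(\beta(A)),A}$ contains invertible partial isometries (the bimodule is an equivalence bimodule, hence singly generated as a right Hilbert module by a unitary when the objects are simple matrix algebras of equal size — and a bisection forces the object sizes matched by $r\circ\beta$ to be Morita equivalent, hence isomorphic, hence of equal matrix size). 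Assembling these choices block-diagonally over the diagonal algebra $\A = \bigoplus_i A_i$ produces an element $u_\beta$ with exactly one non-zero block in each row and column of blocks, each such block a unitary; by the characterisation in part (b) of the Proposition, $u_\beta \in N(\A)$, and invertibility of the assembled matrix gives $u_\beta \in N_u(\A)$.

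Next I would check that $\beta \mapsto u_\beta$ is a group homomorphism. Composition of bisections in $\Bis(\G)$ is composition of permutations on objects together with composition of the chosen arrows in $\C$; the composition in the Morita category is exactly the bimodule tensor product $\E_{g_1}\otimes\E_{g_2}\cong\E_{g_1g_2}$ (used in Example \ref{Morita}), which on the block-matrix picture is ordinary matrix multiplication of the two normalisers. So $u_{\beta_1\beta_2} = u_{\beta_1}u_{\beta_2}$, identities go to the identity normaliser (the unit section of $d$), and inverses are handled by $\beta^{-1}\mapsto u_\beta^*=u_\beta^{-1}$ since each block is unitary. This gives the asserted group homomorphism $\Bis(\G)\to N_u(\A)$.

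For the $*$-functor $\pi:\E\to\G$, I would take $\E$ to be the Fell bundle associated to $\C$ (the fibre space / ``categorical switch-of-focus'' of the saturated unital Fell bundle over $\G$, equivalently the linking-algebra bundle), and let $\pi$ send an element of the fibre over a morphism of $\C$ to the underlying pair of objects, i.e. $\pi = (r,d)$ read off the bundle projection $p:\E\to\G$ composed with the identification of $\C$-morphisms with arrows of $\G$; this is a $*$-functor because $p$ intertwines multiplication and involution (axioms 1, 5, 8 of Definition \ref{defining list}) and $\G$ is the pair groupoid. Finally, viewing each $u\in N_u(\A)$ as a (block-permutation) section $G_0\to\G$ landing in the unit-labelled arrows — more precisely, the normaliser $u$ names, for each $i$, a single arrow $g_i\in\G$, and lifting $g_i$ to the chosen unitary in $\E_{g_i}$ recovers $u$ — one has $\pi\circ u = \id_\G$ on the relevant bisection image, which is the stated right-inverse property.

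The main obstacle I expect is purely a matter of making the statement precise rather than a deep difficulty: the claim ``$\pi\circ u=\id_\G$ for all $u\in N_u(\A)$'' only typechecks once one fixes that $u$ is being regarded as a (local or global) bisection-valued section of $\pi$, i.e. as a map into $\E$ covering a bisection of $\G$, so the real work is setting up compatible notation so that $N_u(\A)$, $\Bis(\G)$ and $\Bis(\E)$ sit in the same diagram; the algebraic content (block permutations, matrix multiplication, unitarity of blocks forced by Morita equivalence of simple matrix algebras of matched size) is routine given part (b) of the Proposition and the equivalence $\C\leftrightarrow\E$ from \cite{BCL Imp}. A secondary point needing care is the full generality ``small full C*-category'' versus the finite-dimensional simple-summand case: in the infinite-dimensional setting one should say \emph{invertible partial isometry} (as the lemma does) rather than unitary and invoke that an imprimitivity bimodule between the relevant corners is singly generated by such a partial isometry; I would state the lemma's proof for the finite-dimensional case used in the paper and remark that the general case follows by the same argument with ``unitary'' replaced by ``invertible partial isometry normaliser''.
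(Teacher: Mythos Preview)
The paper states this lemma without proof, so there is nothing to compare against; your proposal is the only argument on the table. Your overall strategy --- identify a global bisection of the pair groupoid with a permutation of $\Ob(\C)$, lift it block-by-block to a unitary normaliser via choices of partial isometries in the imprimitivity bimodules, and take $\pi$ to be the Fell bundle projection $p:\E\to\G$ read as a $*$-functor --- is exactly the picture the paper has in mind (cf.\ the later Remarks in Section~5, where a choice of such a homomorphism is equated with a choice of flat connection, and Example~\ref{key example}).

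There is, however, a genuine gap. You write that ``a bisection forces the object sizes matched by $r\circ\beta$ to be Morita equivalent, hence isomorphic, hence of equal matrix size''. Morita equivalence of simple finite-dimensional $\bbc$-algebras does \emph{not} imply isomorphism: $\bbc$ and $M_2(\bbc)$ are Morita equivalent but have different dimensions. In the setting of Proposition part~(b), with objects $M_{m_1}(\bbc),\dots,M_{m_p}(\bbc)$ of possibly distinct sizes, a unitary normaliser must have \emph{square} unitary blocks, so the underlying permutation can only move objects of equal size. Thus your map $\beta\mapsto u_\beta$ is not defined on all of $\Bis(\G)\cong S_p$ unless all $m_i$ coincide (or unless you fall back on a trivial homomorphism, which defeats the geometric interpretation). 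The lemma as literally stated (``there is a group homomorphism'') survives via the trivial map, but your intended lifting construction does not; you should either add the hypothesis that the object algebras are mutually isomorphic (which is the case in the Clifford-bundle examples of Section~6 and in the locally trivial setting the paper ultimately uses), or restrict the domain to the size-preserving subgroup of $\Bis(\G)$.

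Your closing remarks about the type-checking of ``$\pi\circ u = \id_G$'' are well taken and accurately diagnose an imprecision in the lemma's statement: $u$ must be regarded as a section of $\pi$ supported on a bisection, and the composite recovers that bisection rather than the identity of $\G$ itself. Making this explicit is necessary for the statement to parse, and the paper does not do so.
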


\begin{remark}
 Let $(\A, \H, \D, \gamma)$ be an even spectral triple. The  commutators $[\D,a]$ $\forall a \in \A$
generate an $\A$-bimodule $\Omega^1_{\D}$, providing the space of 1-forms over the spectral triple
\cite{Connes' book},\cite{Eli 1} %and we interpret as a deformation or analogy of phase space $T^*M$. 
 Note that any $\sigma \in N_i(\A)$ defines an inner derivation $\delta_{\sigma}$ from $\A$ into $\B$, 
and this group, for all $a \in \A$, generates the (``observable'') enveloping algebra $\B$. %$= C(\E)$.
\end{remark}

\subsection{Finite spectral triple categorification}

With the following proposition we argue that a (finite) spectral triple is \emph{already} a category.

\begin{proposition}[Finite spectral triple categorification]
 Finite even $S^o$-real spectral triples over $\bbc$ with $p=4$ (as defined above) are spectral 
C*-categories (as defined above).
\end{proposition}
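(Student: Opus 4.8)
The plan is to unwind both definitions and exhibit the data of a spectral C*-category from the data of a finite even $S^o$-real spectral triple, checking that the axioms match. First I would set up the object space: given $(\A,\H,\D_f,\gamma,J,\epsilon)$ with $\A=\bigoplus_{i=1}^{4}M_{n_i}(\bbc)$ and enveloping algebra $\B=M_{n_m}(\bbc)$, I would take $\Ob(\C)$ to be the four simple summands $A_i=M_{n_i}(\bbc)$. These are pairwise Morita equivalent (they are simple matrix algebras over $\bbc$), so the standard fact recalled in the excerpt from \cite{BCL Imp} gives a full C*-category $\C$ whose hom-spaces $\C_{ij}$ are the imprimitivity bimodules between $A_i$ and $A_j$; concretely, using the faithful representation on $\H\cong\bbc^{n_m}$ and the block decomposition of $\H$ induced by $\epsilon$ and $\gamma$, each $\C_{ij}$ is realised as the corresponding off-diagonal block of $\B=M_{n_m}(\bbc)$, and composition and involution are inherited from matrix multiplication and adjoint in $\B$. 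This $\C$ is small and full by construction, and its diagonal algebra in the sense of the earlier example is exactly $\A=\bigoplus_i A_i$.

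Next I would identify $\sigma$. The content is that $\D_f$, being self-adjoint and giving an inner derivation $a\mapsto[\D_f,a]$ from $\A$ into $\B$ with $\D_f\gamma+\gamma\D_f=0$, sits in $\B$ as a self-adjoint matrix. I would invoke the Lemma on the explicit block form \eqref{ad hoc choice}: the $p=4$ real $S^o$-real constraints force $\D_f$ to have precisely one non-zero block in each block-row and each block-column (the $M,M^*,M^T,\bar M$ pattern). By the earlier Proposition on $N(\C)$, such a matrix is precisely a self-adjoint element of the normalising set $N(\A)=N(\C)$, i.e.\ a continuous global self-adjoint section $\sigma$ of the domain map $d$ of $\C$. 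Finally I would check that $r\circ\sigma:\Ob(\C)\to\Ob(\C)$ is a bijection: the block pattern of $\D_f$ defines a permutation of the four summands (the transposition pairing $A_1\leftrightarrow A_2$ via $M$ and $A_3\leftrightarrow A_4$ via $M^T$), which is visibly a bijection of the four-element object set. Setting $\sigma:=\D_f$ viewed in $N(\C)$ then produces a spectral C*-category $(\C,\sigma)$, and I would remark that the grading $\gamma$ and real structure $J$ are absorbed into the bimodule/block structure rather than appearing as extra data, consistent with the stated abstract.

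The main obstacle I anticipate is the verification that $r\circ\sigma$ is genuinely \emph{bijective} rather than merely a partial map — equivalently, that $\D_f$ really does have a non-zero block in every block-row and block-column, not just at most one. This is where the caveat in the excerpt about Poincar\'e duality and the squareness of $M$ becomes relevant: one must either assume $M$ is invertible (so $M,M^T,\bar M,M^*$ are all nonzero and the induced permutation is defined everywhere), or handle the degenerate case by the formal device mentioned after the Lemma (adjoining a column of zeros and deleting a basis vector) so that the object-level map is a bijection on the nose. A secondary technical point is checking continuity of $\sigma$, but in the finite-dimensional setting this is automatic, and checking that $N(\C)$ as defined abstractly coincides with the concrete normalisers $N(\A)$ — which is exactly the earlier Proposition — so this reduces to citing that result. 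I would close by noting that the construction is reversible (a spectral C*-category on four matrix-algebra objects with such a $\sigma$ recovers all the spectral-triple data), which is what makes the statement an equivalence in spirit even though only one direction is asserted here.
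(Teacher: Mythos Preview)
Your proposal is correct and follows essentially the same approach as the paper: take the simple summands of $\A$ as objects, build the full C*-category from the Morita bimodules with $\B$ as linking algebra, and identify $\sigma$ with $\D_f$ via the block form \eqref{ad hoc choice}. Your treatment is in fact more thorough than the paper's brief proof, which does not explicitly address the bijectivity of $r\circ\sigma$ or the degeneracy caveat you flag; the paper simply asserts that the one-non-zero-block-per-row-and-column pattern ``is exactly what ensures that it is a section (or coretraction) of the domain map of $\C$.''
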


\begin{proof}
 The algebra $\A$ of a finite spectral triple is given by a direct sum of simple matrix  algebras over
$\bbc$. Let each simple algebra $A_i$ define an object in $\Ob(\C)$. The full C*-category $\C$ comes
from the Morita category of isomorphism classes of Morita equivalence bimodules over these objects
wherein the algebra $\B$ provides the linking algebra or ``enveloping algebra'' for $\C$. The Dirac
operator \ref{ad hoc choice} is a self-adjoint matrix in which there appears one non-zero block in each
row and column of blocks, which is exactly what ensures that it is a section (or coretraction) of the
domain map of $\C$.
 \end{proof}
 
\begin{remark}
Note that the $S^o$-reality condition was put in by hand for spectral triple physics  but here it loses
this sense of artificiality due to the categorical structures we are using\footnote{$S^o$-reality
excludes Majorana masses so we would need to modify these constructions to allow their inclusion.}. For
$p > 4$ we would need a modified version of $S^o$-reality. 
\end{remark}

\begin{remark}
 In other work we constructed an example of a finite real spectral triple from a finite  dimensional
non-commutative Fell bundle geometry adding structure such as Poincar\'e duality and reality and we used
this to make physical predictions about the hitherto ad hoc choice of the mass matrix $M$. (Bearing in
mind the fact that the full details of the algebra for a canonical non-commutative standard model is not
yet available.) \cite{sc,preprint}.
\end{remark}

The following could be investigated in order to make connections with other authors'  approaches towards
categorification in non-commutative geometry such as \cite{BCL cqp,BCL stm,MarcolliCat,MeslandCat}
wherein objects are spectral triples.
\begin{conjecture}
 The category of finite even $S^o$-real spectral triples over $\bbc$ and spectral  triple morphisms
\cite{BCL stm} is equivalent to the category of finite dimensional spectral C*-categories and
involutive functors.
\end{conjecture}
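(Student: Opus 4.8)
The plan is to exhibit a pair of functors $F:\mathsf{ST}\to\mathsf{SC}$ and $G:\mathsf{SC}\to\mathsf{ST}$, where $\mathsf{ST}$ is the category of finite even $S^o$-real spectral triples over $\bbc$ with $p=4$ and spectral triple morphisms and $\mathsf{SC}$ is the category of finite dimensional spectral C*-categories and involutive functors, together with natural isomorphisms $G\circ F\cong\id_{\mathsf{ST}}$ and $F\circ G\cong\id_{\mathsf{SC}}$. On objects, $F$ is the content of the Finite spectral triple categorification proposition: to $(\A,\H,\D_f,\gamma,J,\epsilon)$ with $\A=\bigoplus_{i=1}^4 A_i$ one assigns the full C*-category $\C$ whose objects are the simple summands $A_i$, whose hom-sets $\C_{ij}$ are the imprimitivity bimodules sitting as blocks inside the linking (enveloping) algebra $\B$, and whose distinguished self-adjoint section of the domain map is $\sigma:=\D_f$; by the Lemma on the form of $\D_f$ this has exactly one non-zero block in each row and each column, so by the block description in the Proposition on $N(\C)$ it does lie in $N(\C)$ and $r\circ\sigma$ is a bijection, in fact an involution, of $\Ob(\C)$. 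For morphisms, a spectral triple morphism in the sense of \cite{BCL stm} furnishes a $*$-homomorphism between the algebras together with an intertwiner of the representations compatible with the Dirac operators and the gradings; since a $*$-homomorphism between direct sums of matrix algebras is exactly a functor between the associated Morita categories (\cite{BCL Imp}) and compatibility with $\D$ amounts to compatibility with $\sigma$, this produces an involutive functor $F(f):\C\to\C'$, and functoriality of $F$ is then routine.

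The functor $G$ runs $F$ backwards. Given $(\C,\sigma)$ finite dimensional, put $\A=\bigoplus_i A_i$ (the diagonal algebra), let $\B=C^*(\E)$ be the linking algebra of the saturated unital Fell line bundle $\E$ over the pair groupoid $\Ob(\C)\times\Ob(\C)$ corresponding to $\C$ (\cite{BCL Imp}), let $\H\cong\bbc^{n}$ with $n=\sum_i m_i$ carry the canonical faithful representation of $\B$, and set $\D_f=\sigma$. The crux is to produce canonically the gradings $\gamma$, $J$, $\epsilon$, and this is where the hypothesis $p=4$ does the work: the involution $\rho=r\circ\sigma$ splits the four objects into two transposed pairs, say $\{1,2\}$ and $\{3,4\}$; one takes $\epsilon$ with eigenvalue $+1$ on objects $1,2$ and $-1$ on $3,4$, takes $J$ to be the antilinear operator swapping $1\leftrightarrow 3$, $2\leftrightarrow 4$ and conjugating, and lets $\gamma$ be the residual $\ZZ/2$-grading separating $1$ from $2$ and $3$ from $4$; one then checks from $\sigma^*=\sigma$ and the one-block-per-row-and-column shape that $J^2=1$, $J=J^*$, $\D_f\gamma+\gamma\D_f=0$, $[\D_f,\epsilon]=0$ and $[J,\epsilon]_+=0$, i.e. that $G(\C,\sigma)$ is a finite even $S^o$-real spectral triple, and that an involutive functor transports this structure so that $G$ is a functor.

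For the two composites: starting from a spectral triple, $F$ records $\A$, $\H$, $\D_f$ and, through the $p=4$ block pattern, the gradings, so $G(F(-))$ is canonically $*$-isomorphic to the original triple, naturally in morphisms, giving $G\circ F\cong\id_{\mathsf{ST}}$; starting from $(\C,\sigma)$, one recovers the Morita category, hence $\C$ up to equivalence, from $\A$ together with $\B$ (\cite{BCL Imp}) and recovers $\sigma$ as $\D_f$, giving $F\circ G\cong\id_{\mathsf{SC}}$.

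I expect two obstacles. The first is delicate bookkeeping: reconciling the definition of spectral triple morphism of \cite{BCL stm} with that of involutive functor precisely enough that $F$ and $G$ are mutually inverse \emph{on hom-sets} (full and faithful), since the two notions are set up in a priori different frameworks and one must track the direction of the structure maps and the grading-compatibility clauses. The second, and the real heart of the matter, is the canonicity of the reconstruction of $J$, $\gamma$, $\epsilon$ in $G$: for $p=4$ the splitting above should pin them down, but showing there is genuinely \emph{no choice} — so that $G$ is well defined rather than merely defined up to isomorphism, and that the subcategory of $\mathsf{SC}$ actually reached is all of it (degenerate patterns such as $\rho=\id$ forcing $\D_f=0$ must be accounted for) — is the subtle point, and it is exactly here that the case $p>4$ falls outside the present statement until $S^o$-reality is replaced by the modified version alluded to above.
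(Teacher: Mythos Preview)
The paper does not prove this statement: it is explicitly stated as a \emph{Conjecture}, prefaced by ``The following could be investigated\ldots'', and no argument is offered. So there is no paper proof to compare against; what you have written is an attempted resolution of an open problem in the paper.

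As a strategy your outline is reasonable and correctly identifies the object-level correspondence as already supplied by the Finite spectral triple categorification proposition. But there is a genuine gap, and you have in fact put your finger on it yourself without resolving it. The data of a spectral C*-category $(\C,\sigma)$ records only the block pattern of $\D_f$; it does not record $\gamma$, $J$ or $\epsilon$. From $\sigma$ you can read off the involution $r\circ\sigma$ on $\Ob(\C)$, which for the standard $\D_f$ pairs objects $1\leftrightarrow 2$ and $3\leftrightarrow 4$. Your construction of $G$ then \emph{asserts} that $J$ swaps $1\leftrightarrow 3$, $2\leftrightarrow 4$ and that $\epsilon$ separates $\{1,2\}$ from $\{3,4\}$ --- but nothing in $(\C,\sigma)$ singles out this transversal pairing over, say, $1\leftrightarrow 4$, $2\leftrightarrow 3$, nor does it tell you which of the two $\sigma$-orbits is the ``particle'' block and which the ``antiparticle'' block. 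Different such choices yield genuinely different real structures on the same underlying $(\A,\H,\D_f)$, so $G$ as you have described it is not well defined on objects, let alone functorial. Your closing paragraph concedes exactly this (``showing there is genuinely \emph{no choice}\ldots is the subtle point''), which means the proposal is a plan with its central step missing rather than a proof.

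There is also a size mismatch you gloss over: the category $\mathsf{SC}$ of finite dimensional spectral C*-categories contains objects with any finite number $p$ of objects, whereas your $\mathsf{ST}$ is restricted to $p=4$. For an equivalence you must either cut $\mathsf{SC}$ down by additional structure (and then argue that this structure is what canonically encodes $J,\gamma,\epsilon$) or enlarge $\mathsf{ST}$; the paper itself flags that a modified $S^o$-reality would be needed for $p>4$. Until that extra structure is identified the conjecture, as stated, remains open, and your argument does not close it.
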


\section{Fell bundles and path-liftings}

\subsection{Notions of generalised connection and geodesics}

For maximal clarity, let us briefly recall some basic facts on the geometrical concepts that we will
be generalising to an algebraic context. 

Consider a vector bundle $(E^0,\pi,M)$ over a simply connected manifold
$M$ and let $GL(E^0)$ denote its general linear groupoid, that is, the set of all linear
isomorphisms between each pair of fibres: $GL(E^0) = \{ \alpha: E^0_x \to E^0_y ~~\vert~~ x,y \in M,
~\alpha~ \mathrm{an~~ isomorphism} \}$ together with the canonical composition of isomorphisms,
inverses and units $\iota_{x}:E^0_x \to E^0_x$. We also recalled earlier that the fundamental
groupoid $\Pi_1(M)$ of homotopy equivalence classes of paths on a simply connected $M$ is given by the
pair groupoid $\G=M \times M$. Earlier we also recall that $\G=M \times M$ can be thought of as the
deformed or integrated tangent bundle over $M$.

A representation of a groupoid on $E^0$ is a groupoid homomorphism $\rho: \G \to GL(E^0)$. A
choice of representation $\rho$ of the fundamental groupoid is equivalent to a choice of flat connection
$\omega$ on $E^0$. This is because the transition functions of $E^0$ force $E^0$ to be trivial and
because since paths or flows in $M$ that differ by a diffeomorphism have been identified in the homotopy
classes which give the arrows of $\Pi_1(M)$, (any curvature of $M$ has effectively been divided out).
The
arrows in $\G=M \times M$ can be put in one to one correspondence with the set of shortest paths (or geodesic
flows) between each pair of points in $M$. Each morphism $\rho_g \in \rho(\G)$ can be constructed by
integrating the connection along a member of the equivalence class of paths given by a groupoid arrow in
$\G$. We say that the connection $\omega$ is the generator of the representation $\rho$. An isomorphism
in $GL(E^0)$ is called a parallel transport 
%if the generator of $u$ is determined by the connection $\omega$ on $E^0$, in other words 
if it belongs to the representation generated by $\omega$.  The connection is a field of
infinitesimals, a smooth assignment of
geometrical data to each point in $M$. Observe that a unique representation $\rho(\G)$ can be
constructed
from a choice of continuous bisection of $GL(\E^0)$ via the connection data and so it follows that
\emph{a choice of connection on $E^0$ is equivalent to a choice of bisection.} 
\\

%and so given a bisection $x$ of $\G$ and a unitary
%section of $\E$ supported on $x$, one can construct the entire groupoid representation.
To consider curved
connections one will require either (a) a finer Lie groupoid that will treat as two different arrows two
paths that are not diffeomorphic to eachother or \emph{alternatively} (b) a non-commutative
generalisation of $E^0$. For the
latter, one begins with a C*-bundle $\E^0$ because these are Banach bundles, which are defined without
reference to transition functions and so a C*-bundle over the groupoid $M \times M$ is not necessarily trivial.
Moreover, as illustrated in the final section, the curvature may be sourced from the non-commutativity
in the fibres.
\\

Next we define a notion of path-lifting for the context of a C*-bundle $\E^0$
in an ambient Fell bundle $\E$. A unitary path-lifting will give rise to a generalisation of a parallel
transport. We borrow some physics terminology to set up new definitions without making
immediate physical interpretations (the fibres of $\E^0$ may be non-commutative algebras, so we
are being careful not to use language that interchanges between states of the algebra and
states of the Hilbert space). Let $(\E, \pi, \G)$ be a unital saturated Fell bundle over a pair
groupoid $\G = M \times M$ over a simply connected compact manifold $M =\G_0$. Let $\E^0$ denote
the C*-bundle given by the restriction of $\E$ to $\G_0$. Let $\C$ be the C*-category associated to
$\E$. Let $\H$ be a
possibly infinite dimensional Hilbert space carrying a representation of $C^*(\E)$ and $C^*(\E^0)$. Let
each $\Psi \in \H$ be called a ``state'' of an ensemble of ``systems'' parametrised by
$\G^0$ 
%(or equivalently by $\Ob(\C)$)
(or equivalently, by the space of fibres of the ``diagonal'' bundle $\E^0$). Each system $\psi$ has
dimension equal to the dimension of 
%the object algebra labelling or parametrising it. 
the fibre labelling it or parametrising it. Let $\psi_x$ denote the system at $x \in M$ for a given state $\Psi$.
 
% We explained above in what precise way $\G$ should be thought of as a path groupoid with homotopic paths identified.

\begin{definition}
 A \emph{path-lifting} for $\psi_x \in \Psi$ is an assignment of an element $e$ of the fibre
$\E_{(x,y)}$ to a choice of path $(x,y) \in \G$ with $\psi'_y := e \psi_x$ in a new state $\Psi'$. By
consistency, such a path-lifting above fixes a path-lifting for $\psi_y \in \Psi$ (a second
system in the same state as $\psi_x$) over $(y,x)$ given by $e^* \in \E_{(y,x)}$. 
\end{definition}

\begin{definition}
 A \emph{path-lifting operator} $PL:\H \to \H$ on a Hilbert space $\H$ is a section of $(\E,\pi,\G)$ 
supported on a global bisection of $\G$, which has been extended to a bounded operator on $\H$, thus
providing a path-lifting for every $\psi \in \Psi$ for each $\Psi \in \H$.
\end{definition}

%Generalising from unitaries to
%partial isometries will provide a description of non-commutative geodesic. 

\begin{lemma}
 Let $(\E,\pi,\G)$ be a Fell bundle as above whose
restriction to $\G_0$ is a trivial C*-bundle $\E^0$ with underlying vector bundle structure
$E^0$. A choice of unitary path-lifting section is equivalent to a choice of a flat connection on the
vector bundle $E^0$. 
 \end{lemma}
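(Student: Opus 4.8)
The plan is to unwind both sides of the claimed equivalence into the same concrete datum — a continuous choice, for each ordered pair $(x,y)\in M\times M$, of a unitary isomorphism $E^0_x \to E^0_y$ with the cocycle property — and then invoke the earlier observations that (i) a representation of $\Pi_1(M) = M\times M$ on $E^0$ is the same thing as a flat connection on $E^0$, and (ii) for $M$ simply connected such a representation is determined by a bisection of $GL(E^0)$. So the work is entirely in translating ``unitary path-lifting section of $\E$'' into ``groupoid representation $\rho:\G\to GL(E^0)$ by unitaries''.

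First I would fix, once and for all, a global unitary trivialisation of the C*-bundle $\E^0$, which exists by hypothesis (the restriction to $\G_0$ is the trivial C*-bundle with underlying vector bundle $E^0$); this identifies every fibre $\E^0_x$ with a fixed C*-algebra $A$ and its underlying Hilbert-module/vector-space with a fixed $E^0_{\mathrm{fib}}$. Because $\E$ is unital and saturated over the pair groupoid, Example \ref{Morita} and the surrounding discussion identify each fibre $\E_{(x,y)}$ with an imprimitivity $A$--$A$-bimodule, and a path-lifting section $PL$ is (by definition) a continuous section $\sigma$ of $\pi$ supported on a global bisection of $\G$, extended to a bounded operator on $\H$; being supported on a bisection of the pair groupoid means precisely picking one element $\sigma(x,y)\in\E_{(x,y)}$ for every $x$ (with $y = y(x)$ the bisection), and ``unitary'' means $\sigma(x,y)^*\sigma(x,y)$ and $\sigma(x,y)\sigma(x,y)^*$ are the units of the respective object C*-algebras. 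In the trivialisation this is exactly a continuous map $M\to U(E^0_{\mathrm{fib}})$ composed with the identification $\E_{(x,y)}\cong\E_{(x,x)}$, i.e. a continuous bisection of the general linear (here unitary) groupoid $U(E^0)\subset GL(E^0)$ — note the pair groupoid has a canonical base-point so the bisection through $(x_0,x_0)$ may be taken as the reference, and every other unitary element of $\E_{(x,y)}$ is obtained from it by left multiplication by a unitary of the object algebra.

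Next I would run the construction in the ``generalised connection'' subsection in reverse: from the continuous unitary bisection build the unique groupoid homomorphism $\rho:\G = M\times M \to U(E^0)$ with $\rho_{(x,y)} := \sigma(x_0,y)\,\sigma(x_0,x)^{-1}$ (using that $M\times M$ is generated, modulo the cocycle identity, by the arrows out of a single object, exactly as recalled for bisections of $GL(E^0)$). Continuity of $\sigma$ gives continuity of $\rho$; the cocycle identity $\rho_{(y,z)}\rho_{(x,y)} = \rho_{(x,z)}$ is immediate from the formula; unitarity of each $\sigma$ value gives $\rho_{(y,x)} = \rho_{(x,y)}^*$. This is a unitary representation of the fundamental groupoid of the simply connected $M$, hence by the equivalence recalled earlier it is the holonomy representation of a unique flat connection $\omega$ on $E^0$; conversely the parallel transports of any flat $\omega$ furnish, via the fixed trivialisation, a unitary path-lifting section. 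The two passages are mutually inverse by inspection, establishing the bijective correspondence.

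The main obstacle is bookkeeping rather than depth: one must be scrupulous that ``section supported on a global bisection of the pair groupoid, extended to a bounded operator on $\H$'' genuinely encodes the same information as a bisection of $U(E^0)$ — in particular that the passage to the operator on $\H$ loses nothing (the representation of $C^*(\E^0)$ being faithful, or at least the section being recoverable from its action) and that the ``free'' choice left after fixing the bisection of $\G$, namely multiplication by unitaries of the object algebras, is exactly matched by the gauge freedom in choosing the trivialisation of $E^0$, so that the correspondence is well defined independently of that choice. Once this dictionary is pinned down, the flatness is automatic because $M\times M = \Pi_1(M)$ already has all curvature ``divided out'', as explained in the subsection, so no genuine analytic estimate is needed.
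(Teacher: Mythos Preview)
Your approach is the paper's: a unitary path-lifting section is a bisection of $GL(E^0)$ (indeed of $U(E^0)$), and the earlier discussion in the ``generalised connection'' subsection already equates such a bisection with a flat connection on $E^0$. The paper's own proof is a single sentence invoking exactly this, so your elaboration is in the same spirit and largely fine.

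One caution on the added detail: your explicit formula $\rho_{(x,y)} := \sigma(x_0,y)\,\sigma(x_0,x)^{-1}$ does not parse as written, because by definition $\sigma$ is supported only on the chosen global bisection $\{(x,\phi(x)):x\in M\}$ of $\G$, not on all arrows out of a fixed $x_0$. What you presumably mean is that, after passing through the trivialisation, the bisection becomes a continuous map $M\to U(E^0_{\mathrm{fib}})$ and one then builds the representation from that; but this is precisely the step the paper leaves heuristic (``via the connection data''), and your formula does not actually supply it. This is not a gap relative to the paper's level of rigour---you have matched it---just a place where the extra precision you attempt does not quite land.
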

\begin{proof}
 This follows from the above discussions including because the unitary path-lifting section gives rise
to a bisection of $GL(E^0)$. 
\end{proof}

 \begin{remarks}
  \begin{enumerate}
   \item 
In view of Renault's paper (\cite{Renault}, proposition 4.7), we can confirm that the normalisers of
$C^*(\E^0)$ in $C^*(\E)$ are exactly the sections of $\pi$ supported on the local bisections of $\G$ and
so the group
of invertible path-lifting operators can be identified with the group of invertible normalisers
$N_i(C^*(\E^0))$ of $\A = C^*(\E^0)$. Since the unitary normalisers $N_u(\A) \cong \Bis(GL(\E^0))$, a
choice of group homomorphism from $\Bis(\G)$ to $N_u(\A)$ corresponds to a choice of flat connection on
the underlying vector bundle structure of $\E^0$ and it extends to a representation $\rho$ of $\Bis(\G)$
on $\H$.
\item
The inverse semigroup homomorphism $\I(\G) \to \I(GL(\E^0))$ extends to a representation
on $\H$. The non-zero entries of normalisers of $N(\A)$ that are not sums of other normalisers are
elements of $\E$. A unitary operator is an invertible partial isometry $v$. Since we are working with
C*-bundles which do not necessarily have isomorphic fibres, one should generalise the general linear
groupoid $GL(\E^0)$ to an inverse semigroup implemented by partial isometries (that is, $v$ is an
element of $\E$ with a unique quasi inverse $v^*$ satisfying $vv^*v=v$ and $v^*vv^*=v^*$).
  \end{enumerate}
 \end{remarks}

 %If $\G$ is discrete, we can choose a covering of $\G$ using only disjoint global bisections and by choosing path-lifting sections supported on these bisections, we can construct a choice of global flat connection. If $\G$ has continuous a contiuous topology, since $\G$ is \'etale, we can choose a covering of local bisections and a set of path-lifting sections over them that agree on the intersections of their supports. Satisfying $e=ee$. Thus this subset of free normalisers form a global of section of $\pi$ and if $\E$ is unital, they define a category isomorphic to the groupoid $\G$.  

%\begin{proof}
% The arrows $z \in GL(E^0)$ are isomorphisms between fibres of $E^0$ defined by $z(a) = uau^*$ inducing a map $\Bis(\G) \to N_{f,u}(\A)$. The group of global bisections of $GL(E^0)$ 
%\end{proof}

Let $M$ be a simply connected manifold. In view of the discussion above, we can label each arrow in the
deformed or integrated tangent bundle $\G = M \times M$ by a geodesic flow. In view of the categorical
switch-of-focus (see introduction), where we replace geometrical notions in the base space with
algebraic notions in the space of fibres, we now give $\E$ the interpretation of a generalised tangent
bundle (in the case that $\E$ is a line bundle, $C^*(\E) = C^*(M \times M)$) and to each unitary
path-lifting, we give the interpretation of a geodesic flow. However, to define a non-commutative
geodesic (to make precise the transport of fuzzy points in one fibre to fuzzy points in the next fibre)
this does not go quite far enough because in the case that
the fibres of $\E^0$ are not isomorphic (such as in a C*-bundle with non-commutative fibres over a
discrete
space or a Banach bundle that is not locally trivial) or even in any case when $\E^0$ is not identified
with its underlying vector bundle $E^0$, then $GL(E^0)$ no longer provides a good
description of parallel tranport. Since the unitary operators are the invertible partial isometries,
$GL(E^0)$ should be generalised in further work by dropping the condition of invertibility of its
morphisms, so that isomorphisms between fibres are replaced with linear *-homomorphisms
implemented by partial isometries $v$.   

\begin{definition}
 Let $(E,\pi,\G)$ be a unital saturated Fell bundle with not necessarily isomorphic
fibres, a \emph{non-commutative geodesic} is given by a path-lifting $e$ such that $e$ is a partial
isometry.
\end{definition}

The non-local (not relying on geometrical data of an infinitesimal nature and not relying on an
unbounded operator) object we have to model a connection, that is, a generator of a geodesic flow, is a
path-lifting section. This is why unitarity is not required in the definition of a path-lifting
introduced above and this is why we defined path-liftings first instead of generalised parallel
transports. Note also that a path-lifting operator will always be bounded. 

\subsection{Fell bundle triples}

A Fell bundle triple is the following algebraic generalisation of a geometrical space wherein the
``manifold'' is the space of fibres of a C*-bundle $\E^0$ and where the basic geometrical data is given
in the form of a path-lifting operator, instead of a metric or a connection or a curvature 2-form.
Below we show that Fell bundle triples are equivalent to spectral C*-categories, which in turn we have
already compared with spectral triples, the definitive non-commutative spaces. 

\begin{definition}
 A \emph{Fell bundle triple} $(\E, \H, PL)$ consists of a unital saturated orientable Fell bundle
$(\E,\pi,\G)$ over a pair groupoid $\G$ over a simply connected compact manifold $M$, a Hilbert space
carrying (faithful unless stated otherwise) a representation of the algebra $C^*(\E)$ with a
subrepresentation of $C^*(\E^0)$, together with a path-lifting operator $PL:\H \to \H$.
\end{definition}

\begin{proposition}
 Fell bundle triples and spectral C*-categories are equivalent.
\end{proposition}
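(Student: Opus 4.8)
The plan is to establish a two-way construction between the two structures and to check that the relevant axioms match up. The key observation, already prepared by the earlier material, is that a full C*-category is the same data as a saturated unital Fell bundle over the pair groupoid on its object set (see the remark citing \cite{BCL Imp}), so the passage between $\C$ and $\E$ is essentially bookkeeping; the real content is matching the extra datum — a self-adjoint section $\sigma\in N(\C)$ of the domain map on one side, and a path-lifting operator $PL:\H\to\H$ on the other.

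First I would go from a Fell bundle triple $(\E,\H,PL)$ to a spectral C*-category. Let $\C$ be the C*-category associated to $\E$ as in the cited correspondence; it is small, full, and fibred over the maximal equivalence relation on $\Ob(\C)=M$. By definition a path-lifting operator is a section of $(\E,\pi,\G)$ supported on a global bisection of $\G$, extended to a bounded operator on $\H$. A global bisection of the pair groupoid $\G=M\times M$ is exactly a bijection of $M$ (this was recalled in the Groupoids subsection), so the support of $PL$ picks out, for each object $x$, a single object $\beta(x)$ and an element of the fibre $\E_{(\beta(x),x)}=\C_{\beta(x),x}$; collecting these gives a section $\sigma$ of the domain map $d$ with $r\circ\sigma=\beta$ bijective. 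Continuity of $\sigma$ comes from continuity of the section defining $PL$, and I would impose self-adjointness of $PL$ (or, if the definition of $PL$ does not already carry it, restrict to the self-adjoint ones, which is the correct comparison class since $N(\C)$ consists of self-adjoint sections) to get that $\sigma$ is self-adjoint. Hence $(\C,\sigma)$ is a spectral C*-category. The orientability hypothesis on $\E$ in the definition of a Fell bundle triple should be shown to be automatic here, or else matched against an implicit condition on the C*-category side — I would remark that for a Fell bundle over a pair groupoid orientability is the existence of such a bisection-supported normaliser, so it is precisely the existence of $\sigma$, i.e.\ the two conditions are the same.

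Conversely, given a spectral C*-category $(\C,\sigma)$, build the Fell bundle $\E$ over $\G=\Ob(\C)\times\Ob(\C)$ from the imprimitivity-bimodule (linking-algebra) structure of $\C$, which is saturated and unital since $\C$ is full and each $\C_{AA}$ is unital; take $\H$ to be a faithful representation of $C^*(\E)$ restricting to one of $C^*(\E^0)=C^*(\A)$ (existence by the universal representation, and one restricts to $\A=C^*(\E^0)$ to get the required subrepresentation). The section $\sigma$, being a self-adjoint section of $d$ with $r\circ\sigma$ bijective, is by Proposition ``the set $N(\C)$'' an element of the normaliser $N(\A)$ of the diagonal algebra, hence by Renault's result quoted in the remarks (\cite{Renault}, Proposition~4.7) a section of $\pi$ supported on a bisection of $\G$; extending it to $\B(\H)$ via the representation yields a path-lifting operator $PL$. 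I would then verify the two constructions are mutually inverse up to the obvious equivalences (isomorphism of Fell bundles / isomorphism of C*-categories, unitary equivalence of representations), which reduces to the $\C\leftrightarrow\E$ equivalence of \cite{BCL Imp} together with the identification $N(\C)=N(\A)=\{$bisection-supported normalisers$\}=\{$path-lifting operators$\}$.

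The main obstacle I anticipate is not the categorical/Fell-bundle dictionary — that is cited — but the careful handling of the Hilbert space and the word ``equivalent.'' A spectral C*-category carries no Hilbert space, while a Fell bundle triple does, so the equivalence must be read as: spectral C*-categories correspond to Fell bundle triples up to the choice of faithful representation (any two faithful representations give unitarily equivalent, or at least Morita-related, triples). I would make this precise by stating the equivalence at the level of the appropriate categories or by fixing, once and for all, the universal representation, and I would flag that the extension of a bounded bisection-supported section to a genuinely bounded operator on $\H$ uses that $\G=M\times M$ with $M$ compact so $C^*(\E)$ acts by (generalised) compact-type operators on the relevant $L^2$-type module, as in the tangent-groupoid discussion; boundedness of $PL$ was already noted to be automatic. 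A secondary point to nail down is that continuity of $\sigma$ in the C*-category (a purely algebraic object when finite-dimensional, but topological in general) matches continuity of the defining section of the Fell bundle; this is immediate from the bundle topology but deserves an explicit sentence.
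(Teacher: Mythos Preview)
Your proposal is correct and follows essentially the same route as the paper: invoke the \cite{BCL Imp} equivalence between saturated unital Fell bundles over pair groupoids and full C*-categories, and then match the extra datum by identifying a section of $\pi$ supported on a global bisection of $\G$ with a self-adjoint coretraction $\sigma$ of the domain map of $\C$. The paper's own argument is considerably briefer and treats your flagged issues informally --- self-adjointness of $\sigma$ is deduced from the geometric reflection $g\mapsto g^*$ of paths rather than imposed, and the Hilbert-space mismatch you rightly isolate is not addressed at all --- so your explicit two-way construction, your appeal to Renault's characterisation of normalisers, and your remark on fixing a faithful representation are refinements of, not departures from, the paper's approach.
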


\begin{proof}
 We have already recalled that saturated unital Fell bundles over pair groupoids are equivalent  to full
C*-categories. All that remains is to check that their geometrical data coincide. Let $A,B$ be objects
in $\Ob(\C)$ and let $x$ be the corresponding unit in $\G^0$. A \emph{path-lifting} for $\psi_x$ can
equivalently be viewed as a choice of target pair $(y,B)$ and an assignment of a morphism in $\Hom(A,B)$
to the groupoid arrow (or path) $(x,y)$ such that $\psi_y' = e \psi_x$ for a system $\psi'_y$ in a new
state. This fixes a path-lifting for $\psi_y \in \Psi$ (a second system in the same state as $\psi_x$)
over $(y,x)$ given by $e^* \in \Hom(y,x)$. It follows that the fact that a path-lifting operator $PL:\H
\to \H$ is defined from a section supported on a bisection of $\G$ means that it must be a coretraction
of the domain map of $C$; and so a path-lifting operator is a *-functor from $\G$ to $\C$ specifying an
element of a *-functor from $\Bis(\G)$ into $\C$ (or specifying an element of the group $N_i(\A)$) and a
continuous section $\sigma$ of $E$. Lastly, such a path-lifting satisfies $\sigma 
= \sigma^*$ because for every path from $x$ to $y$, there is a geometrically  equivalent reflected path
from $y$ to $x$, to which belong a common piece of geometrical data defined during any given event.
\end{proof}

We make the following new geometrical interpretation of finite Dirac operators:

\begin{proposition}
 Finite even $S^o$-real spectral triples over $\bbc$ are  Fell bundle triples wherein a Dirac operator
$\D_f$ is equivalent to a path-lifting operator $PL$. 
 \end{proposition}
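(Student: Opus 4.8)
The plan is to chain together the two equivalences already established in the excerpt and then verify the identification of the two distinguished pieces of geometrical data. First I would invoke the Proposition labelled "Finite spectral triple categorification", which tells us that a finite even $S^o$-real spectral triple over $\bbc$ with $p=4$ is a spectral C*-category: the summands $A_i = M_{n_i}(\bbc)$ of $\A$ become the objects of a full C*-category $\C$, the enveloping algebra $\B$ supplies the linking (imprimitivity bimodule) structure furnishing the homsets, and the Dirac operator $\D_f$ of the form \ref{ad hoc choice} — a self-adjoint matrix with exactly one non-zero block in each block-row and block-column — is precisely a continuous self-adjoint section $\sigma$ of the domain map with $r\circ\sigma$ bijective on $\Ob(\C)$. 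Then I would invoke the Proposition "Fell bundle triples and spectral C*-categories are equivalent", which upgrades $\C$ to a unital saturated Fell bundle $(\E,\pi,\G)$ over the pair groupoid $\G = \Ob(\C)\times\Ob(\C)$, with $C^*(\E)$ playing the role of $\B$ and $C^*(\E^0)$ the diagonal algebra $\A$, and which identifies $\sigma$ with a path-lifting operator $PL$.

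Second, I would check the three remaining structural requirements so that the resulting data genuinely constitute a Fell bundle triple. (i) \emph{Simple connectedness and compactness of the base}: here the object space $\Ob(\C)$ is a finite discrete set, so the "manifold" $M = \G_0$ is a finite set of points, which is trivially compact and (having taken $\G = M\times M$, the pair groupoid) carries the correct $\Pi_1$; I would remark that in the finite case the standing hypothesis "simply connected compact manifold" is read as its degenerate finite analogue, exactly as in the Examples of §2 where $C^*(M\times M) = M_n(\bbc)$ for $M$ an $n$-point discrete space. (ii) \emph{Faithful representation on $\H$}: the spectral triple already gives a faithful representation of $\B$ (hence of $C^*(\E)$) on $\H\cong\bbc^{n_m}$ restricting to a representation of $\A = C^*(\E^0)$, so this is immediate. (iii) \emph{Orientability}: this is where the hypotheses of the definition of a Fell bundle triple must be matched against the spectral-triple axioms. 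I would argue that the orientability built into a Fell bundle triple is precisely what the grading condition $\D_f\gamma + \gamma\D_f = 0$ encodes — indeed the second Remark after the definition of finite real spectral triple explicitly says this equation "was included in Connes' original axioms as a condition to ensure a reflection of the orientability of a Riemannian spin manifold" — so the even, $S^o$-real triple supplies exactly the orientability datum required.

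Third, and most importantly, I would verify that $\D_f$ \emph{is} a path-lifting operator, i.e. that it meets the defining conditions: a section of $(\E,\pi,\G)$ supported on a global bisection of $\G$, extended to a bounded operator on $\H$, and self-adjoint. Boundedness is automatic ($\D_f$ is a finite matrix, and the last sentence before §4.2 notes a path-lifting operator is always bounded). The section-of-a-bisection property is exactly the "one non-zero block per block-row and block-column" shape of \ref{ad hoc choice}: reading $\D_f$ block-wise against the $4\times 4$ object pattern, the off-diagonal blocks $M, M^*, M^T, \bar M$ pick out, for each source object, a single target object, which is the combinatorial content of a global bisection of the pair groupoid on $4$ objects; the block at position $(i,j)$ lives in $\Hom(A_j,A_i)$, i.e. in the fibre $\E_{(j,i)}$, so $\D_f$ is genuinely a section of $\pi$. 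Self-adjointness $\sigma = \sigma^*$ holds because $\D_f = \D_f^*$ by hypothesis, and I would re-interpret this the way the proof of the previous proposition does: the pair of conjugate blocks $(M,M^*)$ (resp. $(M^T,\bar M)$) is the assertion that the path-lifting over $(x,y)$ and the one over the reflected path $(y,x)$ carry mutually adjoint geometrical data.

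The main obstacle I anticipate is not any of the algebraic bookkeeping but the interpretive mismatch between the \emph{standing geometric hypotheses} in the definition of a Fell bundle triple (pair groupoid over a \emph{simply connected compact manifold}) and the \emph{finite, discrete} setting of a finite spectral triple. The cleanest way around it is to observe that a finite discrete set is the $0$-dimensional degenerate case of the manifold hypothesis — the pair groupoid is still $M\times M = \Pi_1(M)$, $C^*(\E^0)$ is still the diagonal algebra $\A$, and the "deformed tangent bundle" picture degenerates correctly — so the Fell bundle triple axioms are satisfied vacuously or trivially in every clause that refers to the differentiable structure. A secondary subtlety is whether "orientable" in the Fell-bundle-triple sense is literally equivalent to $\D_f\gamma+\gamma\D_f=0$ or merely implied by it; since the Remarks in §3 flag that twisted Fell line bundles may give non-orientable Fell bundle triples still satisfying the grading condition, I would state the correspondence as: the even $S^o$-real structure \emph{provides} (a fortiori) the orientability datum, which suffices for the direction "spectral triple $\Rightarrow$ Fell bundle triple" being claimed here, and note that the converse inclusion is exactly the content of the earlier equivalence propositions together with the adjoint re-interpretation of the blocks of \ref{ad hoc choice}.
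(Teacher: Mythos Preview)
Your proposal is correct and follows exactly the approach the paper intends: the proposition is stated without an explicit proof, being an immediate corollary obtained by composing the two preceding propositions (``Finite spectral triple categorification'' and ``Fell bundle triples and spectral C*-categories are equivalent''), with the identification $\D_f \leftrightarrow \sigma \leftrightarrow PL$ carried along. Your additional discussion of the discrete-vs-manifold base and the orientability subtlety goes beyond what the paper supplies, but these are clarifications rather than departures from the paper's implicit argument.
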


\begin{example}
The diagram illustrates the support of $\D_f$ as in equation \eqref{ad hoc choice}.
 
\vspace{0.5cm}
\begin{figure}[ht]
\qquad \qquad \qquad \qquad \quad
\xymatrix@R=4pc@C=4pc{
  {\bullet}  \ar @ /^1pc/ [r]    &   {\bullet}  \ar@ /^1pc/  [l]      &   {\bullet}  \ar @ /^1pc/ [r]  
&   {\bullet} \ar@ /^1pc/  [l] }
 \end{figure}
\vspace{0.5cm}

\begin{remark}
We remark that a matrix $M$ satisfying the equation of motion $M(M^*M-I)=0$ (see \cite{smv}) is a 
partial isometry and attains the interpretation of a non-commutative geodesic. The mass matrix itself on
the other hand, attains the interpretation of the generator of the geodesic.  
\end{remark}
\end{example}

\begin{example}
 A categorical spectral geometry $(\Cs, \Hs, \Ds)$ \cite{BCL Cncg} (quoted in the introduction) is
essentially data-equivalent to a spectral C*-category and a Fell bundle triple although there are
small technical differences such as $\Cs$ is a pre-C*-category. In general,  (and if $\Ob(\Cs)$
has an underlying structure of a vector bundle) the generator $\Ds$ is extended to an unbounded operator
on $\Hs$ and can be constructed from the geometrical data in $PL$ where $\Ds$ is interpreted as the
connection on $\Ob(\Cs)$. Modular spectral triples \cite{BCL mst} are similar constructions to
categorical spectral triples and involve more details arising from the dynamical quality of von Neumann
algebras.
\end{example}

\begin{example} \label{key example}
 Before we construct the following example of a Fell bundle triple, first consider an example of a
semidirect project Fell bundle $\G \ltimes C^*(\E^0)$ similar to an example from \cite{fbg}. Let $\E$ be
a saturated unital Fell bundle over a topological groupoid $\G$. 
%Since $\E$ is
%saturated, the diagonal algebra $\A = C^*(\E^0)$ is regular in $C^*(\E)$ and we have the semidirect
%product bundle $\E = A * \G $, 
%and $\E$ and $\E^0$ are locally trivial Banach bundles.
The product of elements
$e_1 = (g,a)$ and $e_2 =(h,b)$, $a,b \in C^*(\E^0)$, for each pair $(g,h)$ such that $gh = h \circ g \in
\G$, $d(g) = \pi(a)$, $r(g)=\pi(b)$, in the Fell bundle is given by: 
 
 \begin{equation*}
  e_1e_2 = (gh, \alpha_g(a)b)
 \end{equation*}
where $\alpha_g(a) = u a u^*$ wherein $\alpha_g$ is a linear *-isomorphism of fibres (element of
$GL(\E^0)$) and $u$ is a unitary free normaliser over $g$ varying continuously over all $g \in \G$,
$\pi(u) = g$, $\pi(u^*) = g^*$. $e_1^* = (g^*,a^*)$.

To construct an example of a Fell bundle triple, we set $\E$ to be a unital Fell bundle over $\G$ the
pair groupoid over a compact simply connected space. In the cases that $\E$ is also a vector bundle (as
well as a Banach bundle) then $\E$ is a trivial bundle and so orientable. A path-lifting operator $PL$
is obtained from the unitary elements defined by the field $\alpha_g$ for each $g$ of a global bisection
$x$ of $\G$. $PL$ will automatically be self-adjoint since $u_{g^*} = u^*_g$ for all $g$ in $x$. 
\end{example}

%\begin{remark}
% Notice that it would be possible to construct from the previous locally trivial example a section
%$\sigma$ satisfying Connes' orientability condition $\sigma \gamma + \gamma \sigma = 0$ where $\E$ is a
%possibly unorientable twisted Fell bundle. (For twists see \cite{C*diag} and \cite{Renault}.) 
%\end{remark}

\begin{comments}
\begin{enumerate}
 \item The algebra $\A = C^*(\E^0)$ we call the configuration algebra and the algebra $C^*(\E)$ we call
the
observable or coordinate algebra. Note that if $\E$ is a line bundle then $C^*(\E)=C^*(M \times
M)$\footnote{We take $C^*(M \times M)$ to be a C*-completion of the convolution algebra of $\G$ and
identify it with a C*-completion such as $C^*_{red}(\G,\pi)$ of the algebra of compactly supported
sections of the line bundle over $\G$.}, which is the observable algebra in the tangent groupoid
quantisation. (As already mentioned, traditionally, unbounded observable operators are exponentiated to
obtain a bounded
(unitary) operator and element of a C*-algebra of observables.) 
\item Other work has involved defining an inner automorphism of a C*-bundle $\E^0$ as
an inner automorphism of the algebra commuting with the Banach bundle projection map $\pi$ and defining
a representation of $\Bis(\G)$ as a group homomorphism into this inner automorphism group \cite{DS}.
The context of \cite{DS} treats the relationship
between diffeomorphisms of $M$ and the inner automorphism group of the C*-bundle to illustrate and study
Connes' algebraic analogy of Einstein's equivalence principle. 
\item In further work we hope to consider
examples where the C*-completion of the algebra of sections of the C*-bundle is a von Neumann algebra
with with weight $w$ such that $w$ corresponds to the generator of the inner automorphisms of the
C*-bundle in order to give $w$ an interpretation as generalised 
connection on a C*-bundle. We expect this to involve modular spectral triples \cite{BCL mst}. (A
quantisation program should involve a characterisation of the states of the system and this context
might lead to a clearer description of geometrical states for some form of algebraic quantum gravity.)
\end{enumerate}
\end{comments}

\section{The classical limit}

Recall that a \emph{spin structure} on an orientable Riemannian manifold $M$ is an
equivariant lift of the oriented orthonormal frame bundle respect to the double covering
$P : \mathrm{Spin}(n) \to \mathrm{SO}(n)$. In a reconstruction theorem, the non-commutative geometor
checks that any Riemannian spin  manifold can
be constructed in full (or up to a possible torsion term) from only the data in a real spectral triple.
The material in this paper is towards a quantisation program, so constructing a Riemannian spin manifold
from an example of a spectral C*-category or a Fell bundle triple is about finding the classical
limit, not a reconstruction theorem. For a recent paper on reconstruction theorems see
\cite{Branimir}. 
\\

We have already argued that spectral triples have a natural categorical aspect coming
from the associated Morita category. Moreover, we have seen that \emph{viewing every spectral triple as
a spectral C*-category automatically incorporates a quantisation aspect in the sense that the
non-commutative algebra of observables $C^*(\E)$ is pre-built-in to the formalism, without any need to
actively deform}. This confirms Crane's suggestion that in quantum gravity one 
should think of the whole system in terms of categorical  structures. Our examples with
non-commutative configuration algebras do not even have classical limits as $\hbar$ tends
to zero because the obstruction to the observable algebra tending to become commutative exists in the
fact that the configuration algebra itself is non-commutative. (The limit of these geometries in the
sense of a continuum limit will be in the sense that as one takes a larger scale view, the fuzzy
points in each fibre in $\E^0$ merge to a point as the non-commutative fibres $M_n(\bbc)$ are replaced
by $\bbc$. As the points move closer and closer together, we can go back to using the continuum to
describe the space together with the traditional calculus.) 
\\

This formalism may help towards solving the
problem that the non-commutative standard model is traditionally a classical (general relativity)
theory. Another open problem is the difficulty of deformation quantisation on curved space-times, which
we are considering by allowing the configuration algebra (not only the observable or coordinate algebra)
to be non-commutative. To do quantum gravity on a Fell bundle, first we need to do general relativity on
a Fell bundle. In the
preliminaries, we briefly explained and gave references to Connes' non-commutative general relativity
with regard to fluctuations of the Dirac operator. We should reconstruct general relativity on a
Riemannian spin manifold from the geometrical data provided by a spectral C*-category and its Fell
bundle.

Within the context of the categorical switch-of-focus (see introduction),  we have discussed in the
previous section how a parallel transport can be thought of as a non-local connection and mentioned the
fact that given a holonomy group, a unique bundle and connection can
be fully constructed. (So the data given by a holonomy group already tells us all we need to know about
the curvature of a manifold even without having access to any infinitesimal data.) Analogously, the
Dirac operator also carries geometrical data of an infinitesimal nature about a Riemannian manifold. If
the operator $\sigma$ is a (bounded) generalised Dirac operator as claimed, then there will exist a
non-commutative geometry given by some spectral C*-category and Fell bundle triple over a Riemannian
spin manifold $M$ whose classical limit will provide a Riemannian spin manifold complete with its
traditional Dirac operator, an unbounded operator on the Hilbert space of square integrable sections of
the spinor bundle satisfying Connes' spectral triple axioms. 
\\

As a bundle of C*-algebras, a Clifford bundle has an equivalent description as a C*-bundle  and is
denoted $Cl^0$ or $\E^0$ below. We begin with the trivial case of a Riemannian manifold $M$ together
with a Clifford algebra given by $\bbc$ and where the Riemannian manifold is flat. We have been and we
continue to restrict to compact manifolds due the presence of the unit in the axioms for a category.

\begin{proposition}
 Let $(\E,\pi,\G)$ be a Fell line bundle over the pair groupoid $\G$ over a simply connected compact
orientable Riemannian manifold $M$, with diagonal C*-bundle $\E^0$ and diagonal (or configuration)
algebra $\A=C^*(\E^0)$ and
with enveloping (or coordinate) algebra $C^*(\E) = C^*(M \times M)$\footnote{This is a C*-completion
$C^*_{red}(\G)$ of the algebra of compactly supported sections on $\G$.}. Let $\C$ be the associated
C*-category. Now introduce geometrical data given by a choice of $\sigma$ and $PL$ as defined above. We
can reconstruct from this non-commutative geometry, the classical limit, which is a trivial (flat)
Riemannian spin manifold with Clifford fibre $\bbc$ with a Dirac operator in a coordinate frame in which
the (flat) connection is zero by choosing $\alpha=1$. That is, the Dirac operator is given by the
physicist's flat Dirac operator $\gamma^{\mu}\partial_{\mu} + 0$.
\end{proposition}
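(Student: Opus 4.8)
The plan is to peel off the data in three layers: first read off what the Fell line bundle and its associated category concretely are, then invoke the tangent groupoid quantisation to pass to the continuum, and finally recover the Dirac operator from the path-lifting data with $\alpha = 1$. Since $\E$ is a Fell \emph{line} bundle over $\G = M \times M$, every fibre is $\bbc$, so $C^*(\E) = C^*(M \times M)$ is the completion of the convolution algebra of the pair groupoid, which by the tangent groupoid discussion above is precisely the $\hbar = 1$ fibre --- the compact operators $\mathcal{K}(L^2(M))$ --- of the continuous field $A_\hbar$ associated with $\mathcal{G}M$. The restriction $\E^0$ is the trivial line bundle over $M$, so the configuration algebra $\A = C^*(\E^0) \cong C(M)$ is \emph{commutative}; this is exactly the feature that, as noted above, guarantees this geometry does possess a classical limit. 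The associated full C*-category $\C$ has as objects the fibres $\bbc$ indexed by points of $M$ and as morphisms the fibres over $M \times M$, i.e.\ $\C$ is the pair-groupoid category of $M$ with every object $\bbc$ and all objects (trivially) Morita equivalent, matching the Morita-category description recalled in the earlier examples.

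Next I would manufacture the underlying manifold and its spin structure. Running $\mathcal{G}M = TM \times \{0\} \cup M \times M \times (0,1]$ toward $\hbar = 0$, the field $A_\hbar$ deforms $C^*(M \times M)$ at $\hbar = 1$ into $A_0 = C_0(T^*M)$ at $\hbar = 0$, and Gelfand duality applied to $C_0(T^*M)$ (equivalently to the commutative $\A = C(M)$) returns $M$ with its smooth structure, the Riemannian metric being the one fixed at the outset. For the spin structure: since $M$ is simply connected, $H^1(M;\mathbb{Z}/2) = 0$, so a spin structure, if it exists, is unique; and with Clifford fibre $\bbc$ the spinor bundle is the (trivial) spin bundle of $M$ and the equivariant lift across $\mathrm{Spin}(n) \to \mathrm{SO}(n)$ of the oriented orthonormal frame bundle is the canonical one. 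Hence the data assemble into a genuine commutative real spectral triple to which Connes' reconstruction theorem applies, so that what is being recovered is honestly a Riemannian spin manifold.

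Finally I would extract the Dirac operator from $(\sigma, PL)$. By the Lemma relating unitary path-lifting sections to flat connections (whose hypothesis that $\E^0$ be trivial is met here), a choice of unitary path-lifting on $\E^0$ is the same as a choice of flat connection $\omega$ on the underlying line bundle $E^0$; taking the constant field $\alpha_g = 1$ (each $\alpha_g \in GL(E^0)$ the identity, $u_{g^*} = u_g^*$, so $PL$ is automatically self-adjoint, as in Example \ref{key example}) corresponds to $\omega = 0$ in the global trivialization, i.e.\ to a coordinate frame in which the connection vanishes. The operator $PL$ is the bounded assembly of parallel transports along the geodesic flows labelling the arrows of $M \times M$; differentiating this family in the $\hbar$-direction of $\mathcal{G}M$ near the units --- the same move by which $\sigma$ takes the place of $\D_f$ and, as in the remark on $M(M^*M - I) = 0$, a partial isometry is the geodesic while its generator is Dirac-like --- yields the unbounded operator on $L^2(M,S)$ whose principal symbol is Clifford multiplication and whose connection term is $\omega$. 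With $\omega = 0$ and Clifford fibre $\bbc$ this is exactly $\gamma^{\mu}\partial_{\mu}$, and a routine check (self-adjoint, compact resolvent since $M$ is compact, $[\D,f]$ bounded for $f \in C(M)$) confirms the spectral triple axioms.

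The step I expect to be the main obstacle is making ``classical limit'' into an honest limiting statement rather than a dictionary entry: the tangent groupoid cleanly recovers $C_0(T^*M)$ and hence $M$, but promoting the \emph{bounded} operator $PL$ to the \emph{unbounded} $\gamma^{\mu}\partial_{\mu}$ requires controlling how the constant field $\alpha_g = 1$ of parallel transports rescales in the deformation parameter and showing that its infinitesimal generator at $\hbar = 0$ is the flat Dirac operator --- essentially the claim, implicit in the notions-of-connection subsection, that the generator of the geodesic flow for the zero connection on a trivial Clifford bundle is $\gamma^{\mu}\partial_{\mu}$. Secondary care is needed to check that the spin structure produced in the limit is the canonical one (harmless here, $\pi_1(M) = 0$) and that the boundedness and positivity axioms of the Fell bundle line up with the self-adjointness and the axioms required of a spectral triple.
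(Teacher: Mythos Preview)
Your approach is essentially the paper's: identify the Fell line bundle with the trivial semidirect-product bundle of Example~\ref{key example}, take $\alpha=1$ to fix the flat frame, and then run the tangent-groupoid limit $M\times M \to TM$ to turn the bounded path-lifting data into $\gamma^\mu\partial_\mu$. The one place the paper is more concrete than you is exactly the step you flag as the obstacle: it writes the limit explicitly as $(g_n,\hbar)\to(v,0)$ via Cauchy sequences in $\frac{M\times M}{\hbar}$ and reads off that the pair $(g,a)$ becomes a tangent vector $v$ contracted over spacetime indices with the Clifford element $a$, which is $\gamma^\mu\partial_\mu$ (supplemented by an Atiyah-sequence remark identifying $\mathrm{Der}(Cl^0)$ as the relevant algebroid); your extra material on Gelfand-recovering $M$, spin-structure uniqueness, and verifying the spectral-triple axioms is sound scaffolding the paper simply omits.
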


\begin{proof}
 Any Fell line bundle $\E$ can be constructed as a semidirect product bundle (due
to it being saturated and locally trivial) and so the context is described by the trivial case of
example \ref{key example}. This is because any Fell line bundle is saturated and for any saturated Fell
bundle, $C^*(\E^0)$ is regular in $C^*(\E)$ and the ``regular'' Fell bundles are the semidirect product
bundles. (See for example the introduction of \cite{Renault}). And so any element of $\E$ can be
expressed as $(g,a)$ with $\pi(a) = g \in G$ and the multiplication in the Fell bundle is given as in
\ref{key example} with $\alpha$ being the identity for all $g$ due to the triviality. To choose
$\sigma$, we choose a bisection $x$ of $\G$ and a general element $(g,a)$ of the fibre of $\E$
over each groupoid arrow $g \in G$ in the bisection $x$ supporting $\sigma$, with $\alpha$ defined by
$\alpha(a) = a$ for all elements $a \in \E^0$. The different possible choices of $\alpha$ correspond 
to the different choices of coordinate frames and flat connections. Let $(\A,\H,\D)$ be the 
spectral
triple associated to the Riemannian manifold. The square integrable sections $\H$ of the Clifford
bundle over $M$ carries a representation of the algebras $\A$ and $C^*(\E)$ and we identify $\H$ with
the
Fell bundle triple Hilbert space $\H$. All other structures pertaining to the Hilbert space can be
copied over from $\H$. 
 
 Now we use a technique from the tangent groupoid quantisation  (see \cite{Connes' book} and refer to
the brief description above in the preliminaries). Recall that the Lie algebroid $TM$ integrates to $M
\times M$ and so deriving the classical limit involves the reverse process of this integration.
Underlying the tangent groupoid quantisation \cite{Connes' book}, \cite{Paterson} is the fact that the
limit of a quotient is a derivative (consider a sequence of groupoid arrows $g_n$ where the arrows
become shorter and shorter until they have no length at all and become tangent vectors). Briefly, all
Cauchy sequences in $\frac{M \times M}{\hbar}$ converge in $\frac{TM}{\{0\}}$: consider a sequence, 

\begin{equation*}
 p_n ~ \rightarrow ~ q_n ~ \rightarrow ~ p ~~ \mathrm{with} ~~ (p_n, q_n) \in M \times M, ~~ p \in M,
\end{equation*}
then as 

\begin{displaymath}
  \hbar ~~ \mathrm{goes ~~to~~ zero,} ~ \frac{p_n - q_n}{\hbar} ~ \rightarrow ~ v ~~ \mathrm{where} ~~ v \in TM
\end{displaymath}
and we write

\begin{displaymath}
 (p_n, q_n, \hbar) ~ \rightarrow ~  (p, v, 0), ~~ \mathrm{or} ~~ (g_n, \hbar) ~ \rightarrow ~ (v,0) ~~
\mathrm{if} ~ g_n = (p_n,q_n)
\end{displaymath}

So at each (space-time) point $d(g)$ in $M$, the classical limit of $(g,a)$  is a tangent vector $v$
paired with (or contracted over space-time indices with) an element $a$ of the Clifford algebra as a
fibre over $d(g)$. 

\begin{equation*}
 D = \gamma^{\mu} \partial_{\mu}.
\end{equation*}
We do this at every point, that is, extending continuously over the space $M$.  
(To explain the reference to a direct multiplication involving the space-time index
contraction, first consider a finite dimensional example where $g$ as in $(g,a)$ can be considered as a
matrix unit $e_{ij}$ with $d(e_{ij})=i$ and $r(e_{ij})=j$. Now let $\G^0$ be a continuous space instead.
There is a fibre of $\E^0$ assigned to each point in the space-time and therefore each element $a$ comes
with space-time coordinates. So when $g$ goes to $v$, the vector $v$ is contracted with $a$ over the
space-time indices.)

Another view on the above construction comes from the fact that the algebroid of derivations $Der(Cl^0)$
on the bundle $Cl^0$ integrates to $GL(Cl^0)$. And therefore applying the above procedure to a sequence
in $GL(Cl^0)$, instead of an element of $TM$, the limit is an element of the Lie algebroid which we
identify
with a flat Dirac operator $D_{RSM}$ as above. With the identification of 
$Der(Cl^0)$ with $\mathrm{End}(Cl^0) \otimes  TM$, the Atiyah (short exact) sequence illustrates  the
point that $Der(Cl^0)$ is the extension of $TM$ by the spin group, which we identify with ker$\rho$,
where $\rho$ is the surjective map $\mathrm{End} (Cl^0) \otimes TM ~ \longrightarrow ~ TM$:

\begin{equation}
 0 ~ \longrightarrow ~ \mathrm{ker}\rho ~ \longrightarrow ~ \mathrm{End} (Cl^0) \otimes  TM ~
\longrightarrow ~ TM ~ \longrightarrow ~ 0
\end{equation}
\end{proof}

Next we consider curved Riemannin spin manifolds. 

Normally in order to incorporate curvature into a (simply) connected manifold,  we would need to replace
the (pair) fundamental groupoid with a groupoid in which the diffeomorphisms are not divided out, as
that would be the only way to achieve curvature in view of Einstein's equivalence principle. However,
Connes has provided us with an alternative approach, and so we ``fluctuate'' (see preliminaries) the
flat or initial Dirac operator in the non-commutative fibre of the Clifford bundle to reconstruct a
general Dirac operator on a Riemannian manifold $M$. Examples of Clifford algebras include
$M_n(\bbc)$, $M_n(\mathbb{R})$, $M_n(\mathbb{H})$, $M_n(\bbc) \oplus M_n(\bbc)$, $M_n(\mathbb{R}) \oplus
M_n(\mathbb{R})$, $M_n(\mathbb{H}) \oplus M_n(\mathbb{H})$ and algebras isomorphic to them. Note that at
present the scope is limited to those Clifford bundles that can be equivalently described as locally
trivial C*-bundles with Clifford algebra over $\bbc$. Of course, it might be possible to extend the
formalism 
to include the Clifford algebras over $\mathbb{R})$ whose cooefficients are  in $\mathbb{R}$ or
$\mathbb{H}$ by generalising to ``real C*-bundles'' (i.e. Banach bundles fibred by real C*-algebras).
Nevertheless, the Clifford algebra $\mathbb{H} \oplus \mathbb{H}$ is already included up to isomorphism
since $M_2(\bbc) \cong \mathbb{H} \oplus \mathbb{H}$.

\begin{proposition}
 Let $M$ be a Riemannian spin manifold with Clifford bundle and C*-bundle denoted $Cl^0$ or $\E^0$.
There is a Fell bundle triple $(\E,\pi,\G,PL)$ or equivalently a spectral C*-category $(\C, \sigma)$,
such that the classical limiting geometry as $M \times M \to TM$ results in the original Riemannian spin
manifold.
\end{proposition}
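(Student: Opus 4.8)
The plan is to follow the strategy of the preceding (flat) proposition and then graft Connes' fluctuation procedure onto it to manufacture curvature inside the non-commutative Clifford fibre. First I would build the Fell bundle: take the Clifford bundle $Cl^0 = \E^0$ as the diagonal C*-bundle over $\G_0 = M$ and form a saturated unital Fell bundle $\E$ over the pair groupoid $\G = M \times M$ as a semidirect product bundle $\G \ltimes C^*(\E^0)$, exactly as in Example \ref{key example}; this is legitimate because a saturated Fell bundle over a principal groupoid is regular, hence a semidirect product bundle, as recalled in the proof of the flat proposition. The associated full C*-category $\C$ then has the Clifford algebras (or their Morita representatives) as objects. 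A global bisection $x$ of $\G$ together with a continuously varying field of unitary free normalisers $u_g$, $\pi(u_g)=g$, $g \in x$, defines $\sigma$ (equivalently a path-lifting operator $PL$), and it is automatically self-adjoint since $u_{g^*} = u_g^*$. Taking the trivial field $\alpha_g = \mathrm{id}$ reproduces the flat geometry of the previous proposition, whose classical limit is $\gamma^\mu\partial_\mu$.

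Second, to obtain a \emph{curved} manifold I would not change the groupoid --- dividing out diffeomorphisms forbids curvature in the base --- but instead fluctuate the path-lifting operator in the fibre. Concretely, replace the trivial field $\alpha_g = \mathrm{id}$ by a field of inner automorphisms $\alpha_g = \mathrm{Ad}\,u_g$ built from normalisers implementing the spin connection, i.e. replace $PL$ by $\sum_j r_j\, L(\sigma_j)\, PL\, L(\sigma_j)^{-1}$ in the manner of \eqref{fluctuations}, with $r_j \in \RR$ and $\sigma_j$ ranging over (inner) automorphisms of the Clifford fibre. Because the coefficients are real and the $L(\sigma_j)$ are implemented by unitaries, the fluctuated operator is still a self-adjoint section of $\E$ supported on a bisection, so $(\E,\H,PL)$ remains a Fell bundle triple and $(\C,\sigma)$ a spectral C*-category. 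One must check that the fluctuated bundle stays saturated, unital and orientable; saturation and unitality are unaffected by the semidirect-product twist, and orientability is read off from the bisection supporting $\sigma$.

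Third, I would run the tangent groupoid limit of the fluctuated data. As in the proof of the flat proposition, $M \times M \to TM$ as $\hbar \to 0$ turns the quotient $\tfrac{p_n - q_n}{\hbar}$ into a tangent vector $v$, so a Fell-bundle element $(g,a)$ over $g=(p_n,q_n)$ limits to the pair $(v,a)$ with $a$ an element of the Clifford fibre over $d(g)$, contracted over the space-time index. Applied pointwise to the fluctuated $PL$, the Clifford element now carries the connection coefficient $\omega_\mu$ inherited from $\alpha_g$, and the limit is $\gamma^\mu(\partial_\mu + \omega_\mu)$, the general Dirac operator with arbitrary curvature (and possible torsion) of \eqref{general D}. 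I would then identify this with the canonical Dirac operator $D_{RSM}$ of $M$ using the integration of the derivation algebroid $Der(Cl^0)$ to $GL(Cl^0)$ and the Atiyah sequence $0 \to \ker\rho \to \mathrm{End}(Cl^0)\otimes TM \to TM \to 0$ exactly as before, so that the classical limiting geometry is the original Riemannian spin manifold with its spin structure and Levi-Civita Dirac operator.

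The main obstacle is the second step: proving that \emph{every} spin connection on $M$ (equivalently, the Levi-Civita Dirac operator of the given metric) is realised as a fluctuation of the flat $PL$ by inner data of the Clifford fibre, i.e. a surjectivity statement for the fluctuation map $\omega \mapsto \sum_j r_j\, L(\sigma_j)\, PL\, L(\sigma_j)^{-1}$, together with the compatibility of that fluctuation with the requirement that $PL$ stay supported on a global bisection and that $\E$ stay orientable. This is the bundle-theoretic analogue of Connes' statement that the gauge and gravitational potentials appear as fluctuations of the metric (cf. \cite{forces}, \cite{gravity}), and carrying it out rigorously in the Fell bundle language --- rather than for one fixed spectral triple --- is where the real work lies; a secondary caveat, already flagged in the text, is that the argument as stated only covers Clifford bundles admitting a description as locally trivial C*-bundles with fibre a complex Clifford algebra.
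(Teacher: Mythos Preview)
Your proposal is correct and follows essentially the same three-step architecture as the paper: build the semidirect-product Fell bundle over $M\times M$ with diagonal $Cl^0$, fluctuate the flat $\sigma$ by inner automorphisms of the Clifford fibre via Connes' formula, and pass to the classical limit through the tangent groupoid. The paper's proof is organised identically, writing the fluctuated operator locally as $\D^f = \sum_j r_j U_j (g,a) U_j^*$ with $U_j$ in the spinor group, and then taking $\hbar\to 0$ to obtain $\sum_i c_i(\partial_{x_i}+\omega_i)$.

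The one point worth noting is your ``main obstacle'': surjectivity of the fluctuation map onto all spin connections. The paper does not prove this internally either; it resolves it by invoking Connes' identity $U\D U^* = \D + U[\D,U^*]$ with $U[\D,U^*]\in\Omega^1_\D$, together with a citation to \cite{gravity}, to conclude that the limit is a general Dirac operator with arbitrary torsion and spin connection. So your assessment that ``the real work lies'' in that step is accurate, but in the paper that work is outsourced to the existing spectral-triple literature rather than redone in the Fell bundle language. Your side remarks about checking that the fluctuated \emph{bundle} remains saturated, unital and orientable are slightly misdirected: only $\sigma$ is fluctuated, not $\E$, and since each $U_j(g,a)U_j^*$ sits over the same $g$, the support of $\sigma$ on the chosen bisection is preserved automatically. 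The Atiyah-sequence argument you invoke appears in the paper only in the flat proposition, not here, but it does no harm.
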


The context here is the same as that of the previous theorem except that $\E^0$ is a  more general
C*-bundle and $PL$ is now a path-lifting corresponding to a curved connection. 

\begin{proof}
 Einstein's equivalence principle for general relativity involves selecting an initial flat metric  and
then fluctuating it, producing a curved metric. In non-commutative geometry, these diffeomorphisms
are realised as coordinate algebra automorphisms, $\phi$. A `fluctuation' of the  Dirac operator is
given by Connes as $L(\phi) \D L(\phi)^{-1}$ where $L$ is the spinor lift (a mapping of the 
automorphism group to its double cover). The image group of $L$ is the spin group, which is the inner
automorphism group of the Clifford algebra. We define a fluctuation of $\sigma$ to be (locally)
$\alpha(\sigma) = u \sigma u^*$ with $\alpha$ an inner automorphism of the Clifford algebra (locally)
$\alpha(a)=uau^*$. 

($\alpha$ induces isomorphisms of the fibres of the underlying vector bundle of $Cl^0$ and a
fluctuation induces a groupoid homomorphism, 

\begin{displaymath}
 \alpha \circ \rho : \G \rightarrow GL(Cl^0)
\end{displaymath}

 \begin{displaymath}
  g \mapsto \alpha((g,u)) = (g, \alpha(u))
 \end{displaymath}
since the whole groupoid representation can be constructed from a unitary section supported on a
bisection as explained earlier. This carries essentially a concept as described in \cite{IS}
in
spectral triple gravity. Note that if $C^*(Cl^0)$ is commutative then $\alpha$ is the identity,
implying
a conceptual link between curvuture and non-commutativity). In general non-commutative geometrical
contexts, we have Connes's fluctuations formula, quoted in the preliminaries (\ref{fluctuations}), 
which is a linear combination of fluctuations or diffeomorphisms and results in a curved Dirac operator
$\D^f$.  
The fluctuated Fell bundle Dirac operator is given (locally) by the formula:

\begin{equation}    \label{Cl}
         \D^f =   \sum_j r_j U_j (g, a) U_j^*;~~ U_j \in Cl(d(g)), ~ r_j \in \mathbb{R}
\end{equation}
the $U_j$s can be identified with elements of the spinor group because this is the structure group of
the Clifford bundle.

Carrying out the equivalence principle using Connes's method of taking linear combinations of
fluctuations beginning with an initial flat local Dirac operator $\D_{\mathrm{initial}} = (g,a)$, or
extended over $M$, $\D_{\mathrm{initial}} = \sigma$ and invoking the same argument as in the flat
geometry case about producing the classical limit of the Dirac operator we obtain,
\begin{equation*}
 \sum_j r_j U_j \D_{\mathrm{initial}} U_j^* ~ = ~ \sum_j  r_j U_j \sigma U_j^* 
 \end{equation*}
 and as $\hbar  ~  \rightarrow ~ 0$, 
 \begin{equation*}
  \sum_j r_j  U_j \Big(  \sum_i c_i \frac{\partial}{\partial x}_i \Big) U_j^*  ~ = ~  \sum_i c_i
\Big(\frac{\partial}{\partial x}_i + \omega_i \Big)
\end{equation*}

The result is a general Dirac operator on a Riemannian  spin manifold with arbitrary torsion and spin
connection $\omega$. The last line comes from Connes' non-commutative geometry, see for example
\cite{gravity}, and is a consequence of the fact that if $U$ is unitary then $U\D U^* = \D + U\U^*-\D =
\D + U\D U^* - UU^*\D = \D + U[\D ,U^*]$ where $U[\D ,U^*] \in \Omega^1_{\D}$, the differential algebra.
\end{proof}

\section{Acknowledgements}
Grateful thanks are due to the following people (alphabetical order).  John Barrett, Paolo Bertozzini,
Ali Chamseddine, Roberto Conti, Louis Crane, Ryszard Nest, Pedro Resende. Grazie mille to Paolo
Bertozzini, a great thinker and teacher.

\end{document}